\newtheorem*{theorem*}{Theorem}
\newtheorem{theorem}{Theorem}[section]
\newtheorem{lemma}[theorem]{Lemma}
\newtheorem{corollary}[theorem]{Corollary}
\newtheorem{remark}[theorem]{Remark}
\newtheorem{proposition}[theorem]{Proposition}
\def\K{K\"ahler }
\def\del{\partial}
\def\dbar{\bar\partial}
\def\ddbar{\del\dbar}
\def\del{\partial}
\DeclareMathOperator{\Ric}{Ric}
\def\o{\omega}
\title{\vspace{-0.50in}Comparison of the Calabi and Mabuchi geometries  and applications to geometric flows\vspace{-0.1in}}
\author{Tam\'as Darvas}
\date{\vspace{-0.25in}}
\begin{document}
\maketitle
\begin{abstract} Suppose $(X,\o)$ is a compact K\"ahler manifold. We introduce and explore the metric geometry of the $L^{p,q}$-Calabi Finsler structure on the space of K\"ahler metrics $\mathcal H$. After noticing that the $L^{p,q}$-Calabi and $L^{p'}$-Mabuchi path length topologies on $\mathcal H$ do not typically dominate each other, we focus on the finite entropy space $\mathcal E^\textup{Ent}$, contained in the intersection of the $L^p$-Calabi and $L^1$-Mabuchi completions of $\mathcal H$ and find that after a natural strengthening, the $L^p$-Calabi and $L^1$-Mabuchi topologies coincide on $\mathcal E^\textup{Ent}$. As applications to our results, we give new convergence results for  the K\"ahler--Ricci flow and the weak Calabi flow.
\end{abstract}

\section{Introduction and Main Results}\vspace{-0.1cm}

Suppose $(X^n,\o)$ is a connected compact K\"ahler manifold. By $\mathcal H$ we denote the space of K\"ahler metrics $\o'$ that are cohomologous to $\o$. In the 1950's Calabi initiated the study of the infinite-dimensional space $\mathcal H$, with the hopes of finding K\"ahler metrics with special curvature properties \cite{clb1}. He introduced a Riemannian structure on $\mathcal H$ and formulated many related questions, including his famous conjecture ultimately solved by Yau \cite{yau}. Addressing one of Calabi's predictions, the path length completion of Calabi's Riemannian space was computed by Clarke--Rubinstein \cite{cr}, and they also found a novel relation between Calabi geometry and convergence of the K\"ahler--Ricci (KR) flow on Fano manifolds. One of our purposes in the present paper is to further develop this circle of ideas. The KR flow was introduced by Hamilton \cite{ham}, it satisfies the equation $\frac{d}{dt} \o_{r_t}= \o_{r_t}-\Ric \o_{r_t}$ with $\o \in c_1(X)$, and we refer to \cite{beg} for background, context and historic references. 

In our first theorem, refining the findings of Clarke-Rubinstein $(p=2,q=1)$ \cite{cr} and also that of of Phong--Song--Sturm--Weinkove $(p=\infty,q=1)$ \cite{pssw}, we obtain the following convergence theorem for the KR flow, whose statement on the surface bears no connection with infinite-dimensional geometry:
\begin{theorem}\label{thm: KRconvergence}Suppose $1 \leq q \leq p \leq \infty, \ q \neq \infty$ and $(X^n,\o)$ is Fano with $\o \in c_1(X)$. Let $[0,\infty) \ni t \to \o_{r_t} \in \mathcal H$ be a KR trajectory. Then a K\"ahler--Einstein metric in $\mathcal H$ exists if and only if \vspace{-0.1cm}
\begin{equation}\label{eq: KRCalabilength}
\int_0^\infty \| n - S_{\o_{r_t}}\|_{L^p(X,({\o_{r_t}^n}/{\o^n})^q\o^n)}dt < \infty, 
\end{equation}
where $S_{\o_{r_t}}$ is the scalar curvature of $\o_{r_t}$. Additionally, if the above hold then $t \to r_t$ converges exponentially fast to a K\"ahler--Einstein metric.
\end{theorem}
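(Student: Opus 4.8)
The plan is to read the finiteness of the integral \eqref{eq: KRCalabilength} as the statement that the Kähler--Ricci trajectory has finite $L^{p,q}$-Calabi length, to convert this into convergence of $t\to r_t$ in the $L^1$-Mabuchi metric $d_1$, and then to identify the limit with a weak — hence, by regularity theory, smooth — Kähler--Einstein potential; the converse and the exponential rate will follow from the known convergence theory for the flow once a Kähler--Einstein metric is available. Write $\o_{r_t}=\o+\i\ddbar\phi_t$ and normalize the potentials so that $\dot\phi_t=-f_t$, where $f_t$ is the Ricci potential of $\o_{r_t}$, i.e. $\Ric\,\o_{r_t}-\o_{r_t}=\i\ddbar f_t$ and $\int_X f_t\,\o_{r_t}^n=0$; then $n-S_{\o_{r_t}}=\Delta_{\o_{r_t}}f_t=-\Delta_{\o_{r_t}}\dot\phi_t$, so the integrand of \eqref{eq: KRCalabilength} is exactly the $L^{p,q}$-Calabi speed of the curve $t\to r_t$. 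I would also invoke Perelman's a priori estimates along the normalized flow on a Fano manifold: uniform bounds on $|S_{\o_{r_t}}|$, the diameter, $\|f_t\|_{C^0}$, a uniform Sobolev (hence Green function) inequality (Zhang; Ye), and a uniform bound on the entropy $\int_X\log(\o_{r_t}^n/\o^n)\,\o_{r_t}^n$ — in particular the trajectory lies in $\mathcal E^\textup{Ent}$ with uniformly bounded entropy.

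Next, the elementary inequality
\begin{equation*}
\|g\|_{L^1(X,\o_{r_t}^n)}\le V^{1-1/p}\,\|g\|_{L^p(X,(\o_{r_t}^n/\o^n)^q\o^n)},\qquad V:=\int_X\o^n,
\end{equation*}
which holds precisely in the range $1\le q\le p$ — split $\o_{r_t}^n/\o^n=(\o_{r_t}^n/\o^n)^{q/p}\cdot(\o_{r_t}^n/\o^n)^{1-q/p}$, apply Hölder with exponents $p$ and $p/(p-1)$, and note $\int_X(\o_{r_t}^n/\o^n)^{(p-q)/(p-1)}\o^n\le V$ by Jensen since the exponent lies in $[0,1]$ — applied with $g=n-S_{\o_{r_t}}=\Delta_{\o_{r_t}}f_t$, together with the uniform Green function bound $\|f_t\|_{L^1(\o_{r_t}^n)}\le C\,\|\Delta_{\o_{r_t}}f_t\|_{L^1(\o_{r_t}^n)}$, yields
\begin{equation*}
\int_0^\infty\frac1V\int_X|\dot\phi_t|\,\o_{r_t}^n\,dt=\int_0^\infty\frac1V\|f_t\|_{L^1(\o_{r_t}^n)}\,dt\le\frac{C}{V^{1/p}}\int_0^\infty\|n-S_{\o_{r_t}}\|_{L^p(X,(\o_{r_t}^n/\o^n)^q\o^n)}\,dt<\infty.
\end{equation*}
The left-hand side is the $d_1$-length of $t\to\phi_t$, so this curve is $d_1$-Cauchy; since $\mathcal E^1$ is the $d_1$-completion of $\calH$, it $d_1$-converges to some $u$, and lower semicontinuity of the entropy puts $u$ in $\mathcal E^\textup{Ent}$. (Conceptually this is the comparison of the $L^{p,q}$-Calabi and $L^1$-Mabuchi topologies on $\mathcal E^\textup{Ent}$ established in the body of the paper, specialized to the trajectory, which has bounded entropy by Perelman's estimates.)

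To identify $u$, choose $t_j\to\infty$ with $\|n-S_{\o_{r_{t_j}}}\|_{L^p((\o_{r_{t_j}}^n/\o^n)^q\o^n)}\to0$, so $\|\Delta_{\o_{r_{t_j}}}f_{t_j}\|_{L^1}\to0$ and $\|f_{t_j}\|_{L^1(\o_{r_{t_j}}^n)}\to0$. Using the strengthened $L^p$-Calabi/$L^1$-Mabuchi convergence on $\mathcal E^\textup{Ent}$ — which, thanks to the uniform entropy bound, upgrades $\phi_{t_j}\to u$ in $d_1$ to control of the densities $\o_{r_{t_j}}^n/\o^n$ and their logarithms in the limit — one passes to the limit in the identity $\i\ddbar f_{t_j}=\i\ddbar\big(h_\o-\log(\o_{r_{t_j}}^n/\o^n)-\phi_{t_j}\big)$ (valid up to an additive constant, $\Ric\,\o=\o+\i\ddbar h_\o$) to obtain $\i\ddbar\big(\log(\o_u^n/\o^n)+u-h_\o\big)=0$, i.e. $\o_u^n=e^{h_\o-u+c}\o^n$. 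Thus $u$ is a finite-entropy pluripotential Kähler--Einstein potential, and by regularity theory for the Kähler--Einstein equation (Ko\l odziej; Berman--Boucksom--Guedj--Zeriahi; Aubin--Yau) it is smooth, so $\o_u\in\calH$ is a Kähler--Einstein metric — this proves the ``if'' direction. For ``only if'' and the rate: if $\calH$ carries a Kähler--Einstein metric, then by the convergence theorem for the Kähler--Ricci flow on Fano manifolds admitting such a metric (Tian--Zhu; Phong--Song--Sturm--Weinkove \cite{pssw}; and in full generality) $\o_{r_t}\to\o_{\textup{KE}}$ exponentially fast in $C^\infty(X)$, so $\|n-S_{\o_{r_t}}\|_{C^0}\le Ce^{-\delta t}$ with $\o_{r_t}^n/\o^n$ uniformly bounded, whence the integrand of \eqref{eq: KRCalabilength} is $O(e^{-\delta t})$ and \eqref{eq: KRCalabilength} holds; and once \eqref{eq: KRCalabilength} is assumed, the ``if'' direction produces a Kähler--Einstein metric in $\calH$ and the same theorem gives the exponential convergence of $t\to r_t$.

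The step I expect to be the main obstacle is the passage from the ``weak'' Finsler length controlled by \eqref{eq: KRCalabilength} to honest metric convergence in which the limit is visibly a finite-entropy Kähler potential, and then the passage to the limit in the flow equation, which needs control not merely of the potentials but of the logarithmic densities $\log(\o_{r_t}^n/\o^n)$ in the limit. This is exactly where the strengthened comparison between the $L^p$-Calabi and $L^1$-Mabuchi geometries on $\mathcal E^\textup{Ent}$, together with Perelman's uniform entropy and Sobolev estimates along the flow, are used essentially; by contrast, the Hölder reduction covering the full range $1\le q\le p$ and the concluding pluripotential regularity are routine.
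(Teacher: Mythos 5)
Your reduction of the integrand to the $L^{p,q}$-Calabi speed of the trajectory, the ``only if'' direction, and the exponential rate (all resting on Perelman, Tian--Zhu, Phong--Song--Sturm--Weinkove, Collins--Sz\'ekelyhidi once a K\"ahler--Einstein metric is known to exist) agree with the paper. The substance is the direction ``finite length $\Rightarrow$ KE exists'', and there the paper's argument is a short contradiction that you do not reproduce: finite length makes $t\to r_t$ a $d^C_{p,q}$-Cauchy curve, so by Theorem \ref{thm: Calabicompletion} and Remark \ref{rem: Calabiconvimpliespointwiseconv} it converges in $L^1(X,\o^n)$ to some $r_\infty\in\mathcal E^{L^q}\subset\mathcal E$; if no KE metric existed, Rubinstein's multiplier ideal sheaf theorem \cite{r} would produce a subsequential $L^1$-limit with a positive Lelong number, hence (by \cite[Corollary 1.8]{gz}) a limit outside $\mathcal E$ --- contradiction. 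No construction of the limit metric, no passage to the limit in the flow equation, no regularity theory.

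Your proposal instead tries to build the KE metric as the limit, and as written it has a genuine gap at precisely the step you flag. To control the densities in the limit you invoke the strengthened $L^p$-Calabi/$L^1$-Mabuchi comparison on $\mathcal E^{\textup{Ent}}$, i.e.\ Theorem \ref{thm: eqvonvergence}; but that theorem assumes $\textup{Ent}(\o^n,\o^n_{u_j})\to\textup{Ent}(\o^n,\o^n_u)$, which you never verify, and even the uniform entropy bound you attribute to Perelman is not among his estimates: since $\log(\o_{r_t}^n/\o^n)=\dot\phi_t-\phi_t+h_\o$, bounding the entropy requires controlling $\int_X\phi_t\,\o_{r_t}^n$, which is exactly what fails when the flow does not converge. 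The detour through $d_1$ is also unnecessary, since the hypothesis already makes the curve $d^C_{p,q}$-Cauchy, so Theorem \ref{thm: Calabicompletion} gives $L^q$-convergence of the densities and Remark \ref{rem: Calabiconvimpliespointwiseconv} gives $L^1$-convergence of the potentials --- all the control you were trying to extract from entropy. Even granting that, identifying the limit by passing to the limit in $\o^n_{r_{t_j}}=e^{\dot\phi_{t_j}-\phi_{t_j}+h_\o}\o^n$ still requires upgrading $\|n-S_{\o_{r_{t_j}}}\|_{L^1(\o^n_{r_{t_j}})}\to 0$ to $\|\dot\phi_{t_j}-\overline{\dot\phi}_{t_j}\|_{C^0}\to 0$ and taming the normalizing constants; that is essentially redoing the analytic core of the convergence theorems you then cite. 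The strategy is plausibly completable, but the entropy step does not go through as stated, and the paper's argument via \cite{r} bypasses all of it.
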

As we shall see, the proof of this result requires no new a priori estimates, but instead rests on the observation that condition \eqref{eq: KRCalabilength} is equivalent to saying that the KR trajectory $t \to \o_{r_t}$ has finite length with respect to the $L^{p,q}$-Calabi Finsler metric on $\mathcal H$, that we introduce now. By Hodge theory, if $\o' \in \mathcal H$ then $\o' = \o_u := \o + i\ddbar u$ for some $u \in C^\infty(X)$, hence $\mathcal H$ can be identified with $\mathcal H_\o$, the set of normalized K\"ahler potentials\vspace{-0.1cm}:
$$\mathcal H_\o := \{u \in \mathcal C^\infty(X): \ \o_u = \o+ i\ddbar u >0, \ \int_X u \o^n =0 \}\simeq \mathcal H.\vspace{-0.1cm}$$
As our approach in this note makes heavy use of pluripotential theory, we will mostly work with potentials instead of metrics. Treating $\mathcal H_\o$ as a (trivial) Fr\`echet  manifold, one can introduce the $L^{p,q}$-Calabi Finsler metric for arbitrary $1 \leq q \leq p < \infty$:
\begin{equation}\label{eq: LpCalabiFinslerDef}
\|\beta\|^C_{p,q,u} = \bigg[\frac{1}{V}\int_X | \Delta_{\o_u} \beta |^p\Big[\frac{{\o_u}^n}{\o^n}\Big]^{{q}}\o^n\bigg]^{1/p}, \ \beta \in T_{u} \mathcal H_\o, \ u \in \mathcal H_\o,
\end{equation}
where $V=\int_X \o^n$ is the total volume. The case $p=2,q=1$ gives the Riemannian structure of Calabi \cite{clb1} recently studied extensively in \cite{clm,cr}. In the most important case $q=1$, we will simply refer to the metric in \eqref{eq: LpCalabiFinslerDef} as the $L^p$-Calabi metric.

Using this Finsler metric it is possible to compute the length of smooth curves, and introduce the associated path length pseudometric $d_{p,q}^C$ on $\mathcal H_\o$. It turns out that $(\mathcal H_\o,d_{p,q}^C)$ is a bona fide metric space and its completion and metric growth can be analytically characterized using elements from the finite energy pluripotential theory of Guedj--Zeriahi \cite{gz}, generalizing \cite[Theorem 5.4]{cr} in the process:
\begin{theorem}\label{thm: Calabicompletion} Suppose $(X^n,\o)$ is K\"ahler and $1 \leq q \leq p < \infty$. Then $(\mathcal H_\o,d_{p,q}^C)$ is a metric space whose completion is $(\mathcal E^{L^q},d_{p,q}^C)$. Characterizing convergence, a sequence $\{u_j\}_{j} \subset \mathcal H_\o$ is $d^C_{p,q}$-Cauchy if and only if \vspace{-0.1cm}
\begin{equation}\label{eq: Calabimetricomparison}
\int_X \bigg|\frac{\o_{u_j}^n}{\o^n}- \frac{\o_{u_k}^n}{\o^n} \bigg|^q \o^n \to 0 \textup{ as } j,k \to \infty.\vspace{-0.1cm}
\end{equation}
Additionally, $(\mathcal E^{L^1},d_{2,1}^C)$ is a $\textup{CAT({1}/{4})}$ geodesic metric space.
\end{theorem}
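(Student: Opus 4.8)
The plan is to linearize the Calabi geometry through an explicit change of variables that turns the $L^{p,q}$-Calabi length into an ordinary $L^p$-length in a fixed Banach space. Along a smooth path $t \mapsto u_t$ in $\mathcal H_\o$, differentiating $\o_{u_t}^n$ gives $\frac{d}{dt}\o_{u_t}^n = n\,\i\ddbar \dot u_t \w \o_{u_t}^{n-1} = (\Delta_{\o_{u_t}}\dot u_t)\,\o_{u_t}^n$, so that, writing $f_t = \o_{u_t}^n/\o^n$, one has $\Delta_{\o_{u_t}}\dot u_t = \dot f_t/f_t$. Setting $g_t := f_t^{q/p}$ and substituting, a direct computation yields the pointwise identity $|\Delta_{\o_{u_t}}\dot u_t|^p f_t^{q} = (p/q)^p|\dot g_t|^p$, hence $\|\dot u_t\|^C_{p,q,u_t} = \frac{p}{q}\,\|\dot g_t\|_{L^p(\o^n/V)}$. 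Consequently the Calabi length of $t\mapsto u_t$ equals $\frac pq$ times the $L^p(X,\o^n/V)$-length of the image curve $t\mapsto g_t$. This suggests studying the map $\Phi(u) := (\o_u^n/\o^n)^{q/p}$, which by Yau's solution of the Calabi conjecture \cite{yau} is a bijection of $\mathcal H_\o$ onto $\mathcal G := \{g\in C^\infty(X): g>0,\ \tfrac1V\int_X g^{p/q}\o^n = 1\}$.

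First I would settle that $d^C_{p,q}$ is a genuine metric. The fundamental theorem of calculus together with Minkowski's integral inequality show that the $L^p(\o^n/V)$-length of any curve from $g_0$ to $g_1$ is at least $\|g_1-g_0\|_{L^p(\o^n/V)}$; combined with the length identity above this gives the lower bound $d^C_{p,q}(u_0,u_1)\ge \frac pq\|\Phi(u_0)-\Phi(u_1)\|_{L^p(\o^n/V)}$. Since $\Phi$ is injective (uniqueness in the Calabi--Yau theorem), the right-hand side is positive whenever $u_0\ne u_1$, so $d^C_{p,q}$ is positive-definite, the remaining metric axioms being automatic for a path-length pseudometric.

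Next I would identify the completion and characterize Cauchy sequences, which I expect to be the main obstacle. The content is to match the intrinsic $L^p$-length metric on $\mathcal G$ with the ambient norm metric, and to identify the $L^p$-closure of $\mathcal G$. The lower bound already gives the implication ``$d^C_{p,q}$-Cauchy $\Rightarrow$ $\{g_j\}$ is $L^p(\o^n)$-Cauchy''. For the converse one must produce short admissible paths: given $g_0,g_1\in\mathcal G$ close in $L^p$, connect them by the straight segment renormalized back onto the constraint surface $\{\tfrac1V\int g^{p/q}\o^n =1\}$ via $g\mapsto g\big(\tfrac1V\int g^{p/q}\o^n\big)^{-q/p}$, and bound its length by $\|g_1-g_0\|_{L^p}$; for $q<p$ this surface is genuinely curved, so controlling the length of the renormalized path (with the renormalizing factor bounded away from zero along $L^p$-bounded, hence Cauchy, sequences) is the crux. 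Granting this, $d^C_{p,q}$ and the restricted $L^p$-norm induce the same uniform structure on $\mathcal H_\o$, so the completion is the $L^p(\o^n)$-closure of $\mathcal G$. Because $\|g\|_{L^p}^p = \int_X f^q\o^n$, this closure is $\{g=f^{q/p}: f\ge 0,\ f\in L^q(\o^n),\ \tfrac1V\int_X f\o^n=1\}$, which via the pluripotential solution of the \MA equation and the finite-energy theory of Guedj--Zeriahi \cite{gz} (every absolutely continuous measure with $L^q$ density and total mass $V$ is $\o_u^n$ for a unique $u\in\mathcal E^{L^q}$, with smooth positive densities $L^p$-dense) is exactly $\Phi(\mathcal E^{L^q})$. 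Finally, the Cauchy condition \eqref{eq: Calabimetricomparison} follows from the two elementary inequalities $|f_j^{q/p}-f_k^{q/p}|^p\le|f_j-f_k|^q$ (concavity of $s\mapsto s^{q/p}$) and, via Hölder with exponents $p/q$ and $p/(p-q)$, $\int_X|f_j-f_k|^q\o^n\le (p/q)^q\|\max(g_j,g_k)\|_{L^p}^{p-q}\|g_j-g_k\|_{L^p}^q$, which together show $\{f_j\}$ is $L^q$-Cauchy iff $\{g_j\}$ is $L^p$-Cauchy.

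For the last assertion I specialize to $p=2,q=1$, where $\Phi(u)=(\o_u^n/\o^n)^{1/2}$ and the constraint reads $\tfrac1V\int_X g^2\o^n = 1$, i.e. $\|g\|_{L^2(\o^n/V)}=1$: the map $\Phi$ now surjects onto the nonnegative part $\mathbb{S}^+$ of the unit sphere $\mathbb{S}$ in the Hilbert space $L^2(X,\o^n/V)$, and the factor $p/q=2$ means $d^C_{2,1}$ pulls back $2$ times the intrinsic (angular) metric $d_{\mathbb{S}}(g_0,g_1)=\arccos\langle g_0,g_1\rangle$. The sphere $(\mathbb{S},d_{\mathbb{S}})$ is a CAT$(1)$ space; since any two nonnegative unit vectors satisfy $\langle g_0,g_1\rangle\ge 0$, the set $\mathbb{S}^+$ has diameter $\le \pi/2$ and is geodesically convex (the great-circle arc between nonnegative functions has nonnegative coefficients, hence stays nonnegative), so $(\mathbb{S}^+,d_{\mathbb{S}})$ is a complete geodesic CAT$(1)$ space. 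Rescaling the metric by $2$ divides the curvature bound by $4$, so $(\mathbb{S}^+,2d_{\mathbb{S}})$ is a geodesic CAT$(1/4)$ space; by the completion identification of the previous paragraph this is exactly $(\mathcal E^{L^1},d^C_{2,1})$, as claimed.
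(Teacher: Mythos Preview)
Your proposal is correct and follows the same core strategy as the paper: both recognize that $u \mapsto (p/q)(\o_u^n/\o^n)^{q/p}$ is a Finsler isometry from $(\mathcal H_\o, \|\cdot\|^C_{p,q})$ onto the positive octant $\Bbb S^+_{L^{p/q}}(\o^n,p/q)$ with the ambient $L^p$ metric, and both reduce the theorem to comparing the intrinsic and chordal $L^p$ metrics on this octant. The paper carries out your ``crux'' explicitly as a separate Proposition, bounding the length of precisely the radially renormalized segment you propose; your identification of the key point (the normalizing factor $\|f_t\|_{p/q}$ stays bounded below by $r/2$) is exactly what drives that estimate. Two implementation differences: for the Cauchy characterization \eqref{eq: Calabimetricomparison} the paper appeals to a Vitali-type lemma to pass between $\|f_j-f_k\|_{L^q}\to 0$ and $\|f_j^{q/p}-f_k^{q/p}\|_{L^p}\to 0$, whereas your pair of elementary inequalities (concavity plus H\"older) does this directly and more cheaply; and for the CAT$(1/4)$ assertion the paper quotes Calamai's constant-sectional-curvature computation \cite{clm} and argues via comparison triangles in totally geodesic $2$-spheres, while your argument---geodesic convexity of the nonnegative part of the unit Hilbert sphere via the explicit great-circle formula, then rescaling by the factor $2$---is self-contained and bypasses the curvature calculation entirely.
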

This theorem seems to give the first geometric characterization of the well known potential space $\mathcal E^{L^q}$. Roughly speaking, $\mathcal E^{L^q}$ contains degenerate metrics with volume measure having $L^q$-density, and we give now the precise definition, referring to \cite{gz} for additional details. In \cite{gz}, associated to any $u \in \textup{PSH}(X,\o)$ the authors introduce a non-pluripolar measure $\o_{u}^n$ on $X$, satisfying $\int_X \o_u^n \leq \int_X \o^n$, generalizing the usual complex Monge-Amp\`ere measure of Bedford--Taylor in case $u$ is additionally bounded. By definition, $u \in \mathcal E \subset \textup{PSH}(X,\o)$ if $u$ has ``full volume", i.e., $\int_X \o_u^n = \int_X \o^n$. Given $p,q\geq 1$, two important subclasses of potentials inside $\mathcal E$ are as follows: \vspace{-0.1cm} 
$$
\mathcal E^{L^q} := \big\{u \in \mathcal E, \int_X u \o^n =0, \ \frac{\o_{u}^n}{\o^n} \in L^q(X, \o^n)\big\}, \ \ \mathcal E^p:=\big\{u\in\mathcal E, \int_X u \o^n =0, \ \int|u|^p\o_u^n<\infty\big\}.
\vspace{-0.1cm}$$
\begin{remark} \label{rem: alld^M_peqv} By basic analysis, the $d_{p,q}^C$-convergence characterization \eqref{eq: Calabimetricomparison} extends to sequences inside the completion $\mathcal E^{L^q}$ as well. 

One can derive the equation for $L^{p,q}$-Calabi geodesics by computing the variation of the Finsler energy along curves with fixed endpoints. Contrary to the findings of \cite{clm} in the particular case $p=2,q=1$, this geodesic equation does not admit smooth solutions for general $p$ and $q$, and to avoid complications we omit the discussion of ``weak geodesics" in this note.

A distinguishing feature of the $L^{p,q}$-Calabi geometry is that the associated path length metric $d^C_{p,q}$ induces the same completion on $\mathcal H_\o$ for all $p \geq 1$ and fixed $q$, even though the corresponding Finsler metrics for different $p$ are not even fiberwise conformally equivalent. Indeed, as follows from \eqref{eq: Calabimetricomparison},  $d_{p,q}^C$-convergence does not depend on the value of $p$. We are not aware of other families of infinite dimensional Finsler structures that would enjoy this same property. See \cite{cr2} for a treatment of conformal deformations of the Ebin metric on the space of Riemannian metrics, where a related but different phenomenon occurs.

As we learned after the completion of this paper, in \cite[Section 4]{clzh}, motivated by different goals, a family of Riemannian metrics was introduced and studied in detail that overlaps with our construction of $L^{p,q}$-Calabi metrics when $p=2$.
\end{remark}
On top of applications to geometric flows, our motivation for studying the $L^{p,q}$ generalization of Calabi's Riemannian structure comes from the corresponding $L^p$ generalization of the Mabuchi geometry on $\mathcal H$ \cite{da2}, that led to many applications in the study of canonical K\"ahler metrics \cite{bbj,bdl,dh,dr}, and Theorem \ref{thm: Calabicompletion} stands in direct analogy with the findings of \cite{da1,da2} that we recall now. The $L^p$-Mabuchi Finsler metric of $\mathcal H_\o$ is defined as follows:
\begin{equation}\label{eq: LpFinsler}
\|\phi\|_{p,u} = \bigg[\frac{1}{V}{\int_X |\phi - \bar \phi_{\o_u}|^p \o_u^n}\bigg]^{1/p}, \  \phi \in T_u\mathcal H_\o, \  u \in \mathcal H_\o,
\end{equation}
where $\bar \phi_{\o_u}= \frac{1}{V}\int_X \phi \o_u^n$. 
The case $p=2$ gives the Riemannian structure of Mabuchi--Semmes--Donaldson \cite{ma,se,do}, a space with non-positive sectional curvature, with close ties to canonical K\"ahler metrics. As shown in \cite{dh,dr}, the case $p=1$ gives a geometry with good compactness properties, suitable for the variational study of canonical metrics by way of infinite-dimensional convex optimization. 

As in the case of the $L^{p,q}$-Calabi metric, using the Finsler structure of \eqref{eq: LpFinsler} we can measure the length of smooth curves, and for the associated path length pseudometric $d_p^M$ on $\mathcal H_\o$ we recall now the Mabuchi analog of Theorem \ref{thm: Calabicompletion}, concatenating the relevant parts of \cite[Theorem 1]{da1} and \cite[Theorem 2, Theorem 3]{da2}. We refer to \cite{da1,da2} for further details.
\begin{theorem}\label{thm: Mabuchicompletion} Suppose $(X^n,\o)$ is K\"ahler and $p \geq 1$. Then $(\mathcal H_\o,d^M_p)$ is a metric space whose completion is $(\mathcal E^p,d_p^M)$. Characterizing convergence, a sequence $\{ u_j\}_j \subset \mathcal H_\o$ is $d^M_{p}$-Cauchy if and only if \vspace{-0.05cm}
\begin{equation}\label{eq: Mabuchimetricomparison}
{\int_X | u_j - u_k |^p \o_{u_j}^n} + {\int_X | u_j - u_k |^p \o^n_{u_k}} \to 0 \textup{ as }j,k \to \infty.\vspace{-0.05cm}
\end{equation}
Additionally, $(\mathcal E^2_M,d_2^M)$ is a \textup{CAT(0)} geodesic metric space.
\end{theorem}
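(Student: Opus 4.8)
This is \cite[Theorem 1]{da1} together with \cite[Theorems 2 and 3]{da2}; we outline the structure of the argument. The plan is to analyze $d^M_p$ on the smooth space $\mathcal H_\o$ by means of geodesics, and then pass to the completion by a density argument. For $u_0,u_1\in\mathcal H_\o$ let $[0,1]\ni t\mapsto u_t$ be the weak geodesic joining them, i.e.\ the unique $C^{1,\bar 1}$ solution of the homogeneous complex \MA equation on $X\times\{0\le\mathrm{Re}\,\tau\le 1\}$ with boundary data $u_0,u_1$, whose existence and regularity go back to Chen (with refinements by B\l ocki). The first step is that such curves realize $d^M_p$: differentiating $\|\dot u_t\|_{p,u_t}^p$ in $t$ and integrating by parts at the $C^{1,\bar 1}$ level — legitimate after approximation by $\eps$-geodesics — together with the geodesic equation shows that the $L^p$-speed $\|\dot u_t\|_{p,u_t}$ is constant in $t$, while a comparison argument shows any smooth curve from $u_0$ to $u_1$ is at least this long. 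Hence $d^M_p(u_0,u_1)=\|\dot u_0\|_{p,u_0}$; in particular $d^M_p$ is nondegenerate, so $(\mathcal H_\o,d^M_p)$ is a metric space.

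The analytic heart, and the step I expect to be the main obstacle, is the two-sided comparison estimate: there exist $c_{n,p},C_{n,p}>0$ with
\[
c_{n,p}\,d^M_p(u_0,u_1)^p\ \le\ \int_X|u_0-u_1|^p\,\o_{u_0}^n+\int_X|u_0-u_1|^p\,\o_{u_1}^n\ \le\ C_{n,p}\,d^M_p(u_0,u_1)^p.
\]
The upper bound follows from $|u_0-u_1|\le\int_0^1|\dot u_t|\,dt$ along the geodesic, convexity of $t\mapsto u_t$ (which controls $\dot u_t$ by its endpoint values), and a comparison of the measures $\o_{u_t}^n$ for different $t$ via the $C^{1,\bar 1}$-bound. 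The lower bound is more delicate: for $p=1$ one uses the identity expressing $d^M_1(u_0,u_1)$ through the \MA energy $E$ and the rooftop envelope $P(u_0,u_1)=\sup\{v\in\textup{PSH}(X,\o):v\le\min(u_0,u_1)\}$, and for general $p$ one compares the geodesic speed against the cases $p=1$ and $p=\infty$. This is exactly where the finite-energy pluripotential theory of \cite{gz} — capacities, the domination principle, monotone convergence of \MA measures — is genuinely needed.

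Granting the comparison, the characterization \eqref{eq: Mabuchimetricomparison} of $d^M_p$-Cauchy sequences in $\mathcal H_\o$ is immediate, and the completion is identified with $(\mathcal E^p,d^M_p)$ in the usual way: (i) extend $d^M_p$ to $\mathcal E^p$ by $d^M_p(u,w):=\lim_j d^M_p(u_j,w_j)$ along decreasing smooth approximants $u_j\searrow u$, $w_j\searrow w$ (which exist by Demailly regularization; well-definedness uses monotone convergence of \MA measures and the comparison estimate); (ii) $\mathcal H_\o$ is $d^M_p$-dense in $\mathcal E^p$, which by the comparison estimate reduces to $\int_X|u_j-u|^p(\o_{u_j}^n+\o_u^n)\to 0$ for $u_j\searrow u\in\mathcal E^p$, a consequence of monotone convergence and the defining finiteness $\int_X|u|^p\o_u^n<\infty$; (iii) $(\mathcal E^p,d^M_p)$ is complete, since a Cauchy sequence stays in a sublevel set of the $\mathcal E^p$-energy and such sets are compact in the appropriate topology by \cite{gz}, so a subsequential limit $u\in\mathcal E^p$ exists and is checked to be the $d^M_p$-limit of the whole sequence.

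For $p=2$, $(\mathcal H_\o,d^M_2)$ is the Riemannian space of Mabuchi--Semmes--Donaldson, and the curvature computation of Calabi--Chen gives nonpositive sectional curvature and, more usefully, convexity of $t\mapsto d^M_2(u_t,w_t)$ for any two weak geodesics $u_t,w_t$ issuing from a common point — equivalently, the CAT(0) comparison for smooth geodesic triangles. Since weak geodesics realize $d^M_2$ and are unique, $(\mathcal H_\o,d^M_2)$ is a (noncomplete) CAT(0) space, hence so is its completion. Weak geodesics between points of $\mathcal E^2$ exist as decreasing limits of geodesics between smooth approximants (staying in $\mathcal E^2$ by the energy bound) and have constant speed equal to the $d^M_2$-distance of their endpoints, so $(\mathcal E^2,d^M_2)$ is geodesic; combined with completeness from step (iii) it is a \textup{CAT(0)} geodesic metric space. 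The point needing care is that Calabi--Chen's convexity is proven for smooth (or $\eps$-)geodesics, so transferring it to weak geodesics requires the $\eps\to0$ approximation and the stability of $d^M_2$ under it.
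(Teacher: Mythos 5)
The paper offers no proof of this statement: it is explicitly recalled as a concatenation of \cite[Theorem 1]{da1} and \cite[Theorem 2, Theorem 3]{da2}, and the reader is referred to those papers for details. Your proposal cites the same sources and your outline is consistent with the arguments given there (weak $C^{1,\bar 1}$ geodesics realizing $d^M_p$, the two-sided comparison of $d^M_p(u_0,u_1)^p$ with $\int_X|u_0-u_1|^p(\o_{u_0}^n+\o_{u_1}^n)$ via the rooftop envelope $P(u_0,u_1)$ --- though in \cite{da2} that envelope argument is run for every $p$, not only $p=1$ with an interpolation for general $p$ --- and Calabi--Chen convexity for the CAT(0) claim), so there is nothing further in the paper to compare it against.
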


With Theorems \ref{thm: Calabicompletion} and \ref{thm: Mabuchicompletion} in hand, we can answer in a much more general context the questions of Clarke--Rubinstein \cite[Section 7.2]{cr}, who proposed to compare the $L^2$-Mabuchi and $L^2$-Calabi metric structures. It follows from the proof of Theorem \ref{thm: Calabicompletion}  that $(\mathcal H_\o,d^C_{p,1})$ has finite diameter. As $(\mathcal H_\o,d^M_{p'})$ has infinite diameter \cite{da2}, it is not possible for $d^C_{p,1}$ to globally dominate $d^M_{p'}$. Even in the absence of global metric domination, one may still hope that domination holds on the level of the induced topologies. In case $q > 1$, thanks to strong estimates of Kolodziej \cite[p. 668]{k}, the $d_{p,q}^C$-topology dominates the $C^0$-topology hence implicitly also the $d_{p'}^M$-topology. However the case $q=1$ is much more delicate and does not allow any kind of domination (see \cite[Theorem 1.3]{bl} for related a priori estimates), as we summarize in the following result that fully characterizes the relationship between the $L^{p,q}$-Calabi and $L^{p'}$-Mabuchi topologies:
\begin{theorem} \label{thm: MabuchiCalabinotcompatible} Suppose $(\mathcal H,\o)$ is K\"ahler, $1 \leq q \leq p <\infty$ and $1\leq p'$. The following hold:
\vspace{-0.05in}
\begin{itemize}
\setlength{\itemsep}{1pt}
    \setlength{\parskip}{1pt}
    \setlength{\parsep}{1pt} 
\item[(i)] There exists a sequence $\{ v_k\}_{k \in \Bbb N} \subset \mathcal H_\o$ that is $d^M_{p'}$-Cauchy but does not contain any $d_{p,q}^C$-Cauchy subsequences.
\item[(ii)] If $q > 1$, then any $d_{p,q}^C$-Cauchy sequence inside $\mathcal H_\o$ is $d_{p'}^M$-Cauchy as well. 
\item[(iii)] There exists a sequence $\{ u_k\}_{k \in \Bbb N} \subset \mathcal H_\o$ that is $d^C_{p,1}$-Cauchy but does not contain any $d^M_{p'}$-Cauchy subsequences.
\end{itemize} 
\end{theorem}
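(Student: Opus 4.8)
\emph{Strategy.} All three parts reduce, through Theorems~\ref{thm: Calabicompletion} and~\ref{thm: Mabuchicompletion}, to the interplay between Monge--Amp\`ere \emph{densities} $\o_u^n/\o^n$ and \emph{potentials} $u$: being $d^C_{p,q}$-Cauchy means exactly that the densities are $L^q(X,\o^n)$-Cauchy, whereas being $d^M_{p'}$-Cauchy concerns $L^{p'}(\o_{u_j}^n+\o_{u_k}^n)$-smallness of $u_j-u_k$ and, by \cite{da1,da2}, forces $L^1(X,\o^n)$-convergence of the potentials. Part (ii) is the soft direction, which I would do first. If $\{u_k\}\subset\mathcal H_\o$ is $d^C_{p,q}$-Cauchy with $q>1$, then by Theorem~\ref{thm: Calabicompletion} the densities $f_k:=\o_{u_k}^n/\o^n$ are Cauchy in $L^q$, hence bounded there by some $A$, and $\|f_j-f_k\|_{L^1}\le V^{1-1/q}\|f_j-f_k\|_{L^q}\to0$. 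Passing to the $\sup$-normalized $\tilde u_k:=u_k-\sup_Xu_k$ (for which $\o_{\tilde u_k}^n=f_k\o^n$), Kolodziej's uniform estimate \cite{k} gives $\|\tilde u_k\|_{C^0}\le C(A,q)$ and his stability estimate gives $\|\tilde u_j-\tilde u_k\|_{C^0}\to0$; since $\sup_Xu_k=-\tfrac1V\int_X\tilde u_k\,\o^n$, this upgrades to $\|u_j-u_k\|_{C^0}\to0$, whence $\int_X|u_j-u_k|^{p'}\o_{u_j}^n\le\|u_j-u_k\|_{C^0}^{p'}V\to0$ and likewise with $\o_{u_k}^n$, so $\{u_k\}$ is $d^M_{p'}$-Cauchy by Theorem~\ref{thm: Mabuchicompletion}.

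\emph{Part (i).} The plan is to build potentials converging to $0$ uniformly whose Monge--Amp\`ere densities oscillate too wildly to be $L^q$-precompact. Fix a coordinate ball $B_1\subset X$, a cutoff $\chi$ with $\chi\equiv1$ on $B_{1/2}$ and $\mathrm{supp}\,\chi\subset B_1$, a small $\eps>0$, and a $\mathbb Z^{2n}$-periodic $\phi\in C^\infty(\CC^n)$ chosen generically so that, on $\mathrm{supp}\,\chi$, the function $w\mapsto\det(\o(z)+\eps\,\i\ddbar\phi(w))/\det\o(z)$ is nonconstant in $w$. Set $v_k:=\chi(z)\,\tfrac{\eps}{k^2}\,\phi(kz)-c_k$ with $c_k:=\tfrac1V\int_X\chi(z)\,\tfrac{\eps}{k^2}\phi(kz)\,\o^n$ (so $\int_Xv_k\o^n=0$); since $\i\ddbar v_k=\eps\,\chi\,(\i\ddbar\phi)(kz)+O(\eps/k)\ge-\o$ for $k$ large and $\eps$ small, $v_k\in\mathcal H_\o$. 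Because $\|v_k\|_{C^0}=O(\eps/k)\to0$ we get $\int_X|v_j-v_k|^{p'}(\o_{v_j}^n+\o_{v_k}^n)\le(\|v_j\|_{C^0}+\|v_k\|_{C^0})^{p'}\,2V\to0$, so $\{v_k\}$ is $d^M_{p'}$-Cauchy; and $v_k\to0$ uniformly yields convergence in capacity, hence $f_k:=\o_{v_k}^n/\o^n\rightharpoonup1$ weakly. On $\{\chi=1\}$ one has $f_k(z)=\det(\o(z)+\eps(\i\ddbar\phi)(kz))/\det\o(z)+O(1/k)$, which by equidistribution of $\{kz\}$ stays a fixed positive $L^q$-distance from every constant, so $\liminf_k\|f_k-1\|_{L^q(X,\o^n)}\ge\delta>0$. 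Any $L^q$-convergent subsequence of $\{f_k\}$ would, by the weak convergence, have limit $1$, which is impossible; so no subsequence of $\{f_k\}$ is $L^q$-Cauchy, and by Theorem~\ref{thm: Calabicompletion} no subsequence of $\{v_k\}$ is $d^C_{p,q}$-Cauchy.

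\emph{Part (iii).} Here the point is that for $q=1$ there is no Kolodziej-type stability (compare the a priori estimates of \cite{bl}): $\mathcal E^{L^1}\not\subset\mathcal E^{p'}$, and since the $d^C_{p,1}$-topology does not see the potentials, a $d^C_{p,1}$-Cauchy sequence is free to converge in $L^1(X,\o^n)$ to a potential lying outside $\mathcal E^{p'}$. I would first produce such a target. In a chart around $x_0$, with $t:=-\log|z|^2$ and profile $\psi(s):=-c\,s^{1-\delta}$ for a suitably small $\delta=\delta(n,p')>0$, let $u_0$ be a global $\o$-psh function on $X$, smooth off $B_{1/4}(x_0)$, that equals the singular model $\psi(t)$ near $x_0$; since $\psi$ is decreasing and convex, $\i\ddbar\psi(t)=\psi'(t)\,\i\ddbar t+\psi''(t)\,\i\del t\w\dbar t\ge0$, so $u_0$ is genuinely $\o$-psh, and as $\i\ddbar t$ is a negative multiple of the rank-$(n-1)$ form $\i\ddbar\log|z|^2$ while $\i\del t\w\dbar t$ has rank one, the pure power $(\i\ddbar\psi(t))^n$ keeps only the mixed term and $\o_{u_0}^n/\o^n\sim(\log\tfrac1{|z|})^{-n\delta-1}|z|^{-2n}$ near $x_0$ --- integrable for every $\delta>0$. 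Thus $\o_{u_0}^n=f\o^n$ with $f\in L^1(X)$ and $u_0\in\mathcal E$ (the singularity being milder than logarithmic), so after normalization $u_0\in\mathcal E^{L^1}$; while for $\delta$ small, $\int_X|u_0|^{p'}\o_{u_0}^n\gtrsim\int_0^{1/4}(\log\tfrac1r)^{p'(1-\delta)-n\delta-1}\tfrac{dr}{r}=\infty$, so $u_0\notin\mathcal E^{p'}$. Now regularize: take $u_k\in\mathcal H_\o$ with $u_k\searrow u_0$, obtained by keeping $u_k=u_0$ off $B_{1/4}$ and replacing $t$ by $t_k:=-\log(|z|^2+\eps_k^2)$, $\eps_k\downarrow0$, near $x_0$. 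Then $\o_{u_k}^n/\o^n=f$ off $B_{1/4}$ and converges to $f$ pointwise a.e.\ on $B_{1/4}$, while $\int_{B_{1/4}}\o_{u_k}^n=V-\int_{X\sm B_{1/4}}\o_{u_0}^n=\int_{B_{1/4}}\o_{u_0}^n$; by Scheff\'e's lemma $\o_{u_k}^n/\o^n\to f$ in $L^1(X,\o^n)$, so $\{u_k\}$ is $d^C_{p,1}$-Cauchy by Theorem~\ref{thm: Calabicompletion}. Finally $u_k\searrow u_0$ with $u_0\in L^1$ gives $u_k\to u_0$ in $L^1(X,\o^n)$, so if some subsequence were $d^M_{p'}$-Cauchy its limit $v\in\mathcal E^{p'}$ (in the completion of Theorem~\ref{thm: Mabuchicompletion}) would also be its $L^1(X,\o^n)$-limit, forcing $v=u_0\notin\mathcal E^{p'}$ --- a contradiction. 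Hence $\{u_k\}$ has no $d^M_{p'}$-Cauchy subsequence.

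\emph{Where the difficulty lies.} Part (ii) is essentially an invocation of Kolodziej. In (i) the real content is the equidistribution lower bound $\liminf_k\|f_k-1\|_{L^q}>0$ together with checking $v_k\in\mathcal H_\o$ and absorbing the cutoff errors. The genuinely delicate step is (iii): engineering $u_0$ with an $L^1$ Monge--Amp\`ere density yet infinite $\mathcal E^{p'}$ moment. The mechanism making this possible is that a $-(\log\tfrac1{|z|})^{1-\delta}$ singularity has a \emph{rank-deficient} complex Hessian, so the top Monge--Amp\`ere power avoids the $|z|^{-2n}(\log)^{O(1)}$ blow-up one would naively fear; balancing $\delta$ against $n$ and $p'$, and then verifying the $L^1$-convergence of the regularized densities, is the crux.
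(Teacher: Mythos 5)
Your proposal is correct in outline, and parts (i) and (iii) take a genuinely different route from the paper. Part (ii) is the same argument (Kolodziej's uniform and stability estimates give $C^0$-Cauchyness, which trivially implies $d^M_{p'}$-Cauchyness via \eqref{eq: Mabuchimetricomparison}). For (i), the paper instead takes $u=\max(v_0,v_1)$, whose Monge--Amp\`ere measure charges a real hypersurface, so $u\in\mathcal E^{p'}\setminus\mathcal E^{L^q}$, and then approximates $u$ in $d^M_{p'}$ using the density of $\mathcal H_\o$ in its Mabuchi completion; your oscillating sequence $v_k=\chi\frac{\eps}{k^2}\phi(k\cdot)$ stays entirely inside $\mathcal H_\o$ and converges uniformly to $0$, with the failure of $L^q$-precompactness of the densities coming from the classical homogenization/Riemann--Lebesgue lemma rather than from a singular limit. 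For (iii), the paper produces $v\in\mathcal E^{L^1}\setminus\mathcal E^{p'}$ abstractly, by solving $\o_v^n=f\o^n$ with $f$ an $L^1$ density concentrated on the level sets of a bad potential (via \cite[Theorems A and C]{gz}), whereas you build the target explicitly with an attenuated-logarithmic radial singularity and compute its Monge--Amp\`ere density by hand. What the paper's route buys is brevity: all the analytic content is outsourced to \cite{gz} and to the density of $\mathcal H_\o$ in both completions, and the final contradiction is run uniformly in both (i) and (iii) through weak convergence of measures. What your route buys is explicitness and independence from \cite[Theorem A]{gz}; it also exhibits the two failures as concrete local phenomena (oscillation of densities versus a mild point singularity with divergent $p'$-moment).

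A few steps you should not leave implicit if you write this up. In (i), the lower bound $\liminf_k\|f_k-1\|_{L^q}>0$ is the entire content and needs the two-scale convergence lemma stated and proved (or referenced), together with the observation that $w\mapsto\det(\o(z)+\eps\,i\ddbar\phi(w))$ is nonconstant for small $\eps\neq0$ because its linearization $\mathrm{tr}_{\o(z)}(i\ddbar\phi)$ cannot vanish identically for nonconstant periodic $\phi$. In (iii), the global gluing of the radial model into an $\o$-psh function requires care: a regularized $\max$ with a constant would truncate the singularity at $x_0$ rather than keep it, so one should instead use a cutoff applied to a \emph{small multiple} of $\psi(t)-\rho$ (with $\rho$ a local potential of $\o$) so that the cutoff errors are absorbed by $\o$; and the claim $u_0\in\mathcal E$ should be justified by checking that the residual mass $\lim_{r\to0}\int_{B_r}\o_{u_0}^n$ vanishes (equivalently $\int_Xf\,\o^n=V$), which here follows from $|\psi'(t)|^n\to0$. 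With these points filled in, the argument is complete.
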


For the rest of the introduction, let us focus on the case $q=1$, most important from the point of view of geometric applications. An important step in the the proof of the previous theorem is noticing that the completion of $\mathcal H_\o$ with respect to the $L^p$-Calabi and $L^{p'}$-Mabuchi metrics cannot contain each other. Though containment is not possible, it is natural to search for interesting subspaces of the intersection $\overline{(\mathcal H_\o,d_{p'}^M)} \cap \overline{(\mathcal  H_\o,d_{p,1}^C)} = \mathcal E^{p'}\cap \mathcal E^{L^1}.$  

Given the importance of the $L^1$-Mabuchi metric in applications to existence/uniqueness of canonical K\"ahler metrics \cite{dr,dh,bbj}, we further restrict attention to the case $p\geq 1$ and $p'= 1$. A natural subspace of the intersection $\mathcal E^{1}\cap \mathcal E^{L^1}$ is the space of potentials with finite entropy, studied in \cite{bbegz,bdl} in connection with canonical K\"ahler metrics:
$$\mathcal E^\textup{Ent} := \{u \in \mathcal E: \textup{Ent}(\o^n,\o^n_u) < \infty, \  \int_X u \o^n=0\},$$
where $\textup{Ent}(\o^n,\o^n_u)=\infty$ if $\o^n_u$ is not absolutely continuous with respect to  $\o^n$, and is equal to $\int_X \log\big( \frac{\o^n_u}{\o^n}\big)\frac{\o^n_u}{\o^n} \o^n$  otherwise. By definition, $\mathcal E^\textup{Ent} \subset \mathcal E^{L^1}$ and it is well known that also $\mathcal E^\textup{Ent} \subset \mathcal E^1$ (see \cite{bbegz, bdl}), hence as proposed
$$\mathcal E^{\textup{Ent}} \subset \mathcal E^1 \cap \mathcal E^{L^1}.$$
It follows that $\mathcal E^{\textup{Ent}}$ can be endowed with two different non-complete topologies induced by $d^C_{p,1}$ and $d^M_1$. A natural way to make these topologies complete on $\mathcal E^{\textup{Ent}}$ is to strengthen them enough to make the map $u \to \textup{Ent}(\o^n,\o^n_u)$ continuous. It turns out this procedure gives equivalent topologies and in fact much more is true:
\begin{theorem} \label{thm: eqvonvergence} Suppose $u_j,u \in \mathcal E^{\textup{Ent}}$ satisfy $\textup{Ent}(\o^n,\o_{u_j}^n) \to \textup{Ent}(\o^n,\o_u^n) $. Then the following are equivalent:
\vspace{-0.1in}
\begin{itemize}
\setlength{\itemsep}{1pt}
    \setlength{\parskip}{1pt}
    \setlength{\parsep}{1pt}  
\item[(i)] ${u_j} \to {u}$ in $L^1(X,\o^n)$. 
\item[(ii)] $\o^n_{u_j} \to \o^n_{u}$ in the weak sense of measures.
\item[(iii)] $d^M_1(u_j,u) \to 0$.
\item[(iv)] $d^C_{p,1}(u_j,u) \to 0$.
\end{itemize}
\end{theorem}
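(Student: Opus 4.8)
The plan is to prove a cycle of implications by establishing three equivalences: the two ``potential-side'' statements (i)$\Leftrightarrow$(iii), the two ``measure-side'' statements (ii)$\Leftrightarrow$(iv), and the bridge (i)$\Leftrightarrow$(ii); this chains all four. Three standing facts are used throughout. \textbf{(a)} Since $t\mapsto t\log t$ is superlinear and bounded below, the entropy bound $\sup_j\textup{Ent}(\omega^n,\omega_{u_j}^n)<\infty$ forces the densities $f_j:=\omega_{u_j}^n/\omega^n$ to be uniformly $\omega^n$-integrable (de la Vall\'ee--Poussin); note also that for $v\in\mathcal E^\textup{Ent}$ one has $\textup{Ent}(\omega^n,\omega_v^n)=\int_X f_v\log f_v\,\omega^n$ with $f_v$ the $\omega^n$-density of $\omega_v^n$. \textbf{(b)} The set $\{v\in\textup{PSH}(X,\omega):\int_X v\,\omega^n=0\}$ is compact in $L^1(X,\omega^n)$. \textbf{(c)} There is a uniform Skoda-type estimate $\sup\{\int_X e^{\lambda(-v)}\,\omega^n: v\in\textup{PSH}(X,\omega),\ \int_X v\,\omega^n=0\}<\infty$ for some small fixed $\lambda>0$ (see \cite{gz}). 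The crux will be (i)$\Rightarrow$(iii).

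I would first dispatch the bridge and the measure side. For (i)$\Rightarrow$(ii), use the known continuity statement that $u_j\to u$ in $L^1$ together with uniformly bounded entropy forces $u\in\mathcal E^\textup{Ent}$ and $\omega_{u_j}^n\to\omega_u^n$ weakly \cite{bbegz,bdl}. For (ii)$\Rightarrow$(i): from any subsequence extract, using (b), an $L^1$-convergent further subsequence $u_{j_k}\to v$; the same continuity statement identifies its Monge--Amp\`ere measure as $\omega_v^n$, hence $\omega_v^n=\omega_u^n$ by (ii), hence $v=u$ by uniqueness of normalized solutions in $\mathcal E$ (see \cite{gz}); all $L^1$-limit points being $u$, we get $u_j\to u$ in $L^1$. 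For the measure side, (iv)$\Rightarrow$(ii) is immediate: by the Remark following Theorem \ref{thm: Calabicompletion}, $d^C_{p,1}(u_j,u)\to0$ is equivalent to $\int_X|f_j-f|\,\omega^n\to0$ (where $f=\omega_u^n/\omega^n$), which gives weak convergence $\omega_{u_j}^n\to\omega_u^n$. For (ii)$\Rightarrow$(iv): (a) and Dunford--Pettis yield $f_j\rightharpoonup f$ weakly in $L^1$ (a weak-$L^1$ limit point tests as $f$ against $C(X)$, by (ii)), and since $\int_X f_j\log f_j\,\omega^n\to\int_X f\log f\,\omega^n$ with $t\log t$ strictly convex, Visintin's strict-convexity lemma upgrades this to $f_j\to f$ strongly in $L^1$, i.e.\ $d^C_{p,1}(u_j,u)\to0$. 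This is where the \emph{convergence} of entropies (not mere boundedness) first enters.

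Next I would establish (iii)$\Rightarrow$(i). By Theorem \ref{thm: Mabuchicompletion} applied to the interlaced sequence $u,u_1,u,u_2,\dots$, the hypothesis $d^M_1(u_j,u)\to0$ gives $\int_X|u_j-u|\,\omega_u^n\to0$. Extract an $L^1(X,\omega^n)$-convergent subsequence $u_{j_k}\to v$ by (b); passing to $\omega^n$- and hence $\omega_u^n$-a.e.\ convergence (as $\omega_u^n\ll\omega^n$) and matching with the $L^1(\omega_u^n)$-limit shows $v=u$ $\omega_u^n$-a.e., so $v\le u$ on $X$ by the domination principle (as $u\in\mathcal E$), and then $v=u$ because $u-v\ge0$ and $\int_X(u-v)\,\omega^n=0$. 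As all $L^1$-limit points of $\{u_j\}$ are $u$, we get $u_j\to u$ in $L^1$.

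It remains to prove (i)$\Rightarrow$(iii), the main obstacle. Given (i), the already-proven chain (i)$\Rightarrow$(ii)$\Rightarrow$(iv) yields $f_j\to f$ in $L^1(X,\omega^n)$; passing to an a.e.-convergent subsequence and invoking Scheff\'e's lemma together with the entropy convergence $\int_X f_j\log f_j\,\omega^n\to\int_X f\log f\,\omega^n$ and the bound $t\log t\ge-1/e$ promotes this to $f_j\log f_j\to f\log f$ in $L^1$, so that both $\{f_j\}$ and $\{f_j\log f_j\}$ are uniformly $\omega^n$-integrable. I would then show $\{u_j\}$ is $d^M_1$-Cauchy and identify its limit with $u$ exactly as in (iii)$\Rightarrow$(i), using that $d^M_1$-convergence implies $L^1(X,\omega^n)$-convergence \cite{da2,dr}. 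To bound $\int_X|u_j-u_k|\,\omega_{u_j}^n$, split $X=\{f_j\le M\}\cup\{f_j>M\}$. Over the first set, $\int_X|u_j-u_k|\,\omega_{u_j}^n\le M\int_X|u_j-u_k|\,\omega^n\to0$ as $j,k\to\infty$ for fixed $M$, by (i). Over the second, write $|u_j-u_k|\le(C_0-u_j)+(C_0-u_k)$ with $C_0:=\sup_j\sup_X u_j<\infty$ and each summand $\ge0$, and apply the Young/Gibbs inequality $\lambda\int_A(C_0-u_\ell)g\,\omega^n\le\int_A(g\log g-g)\,\omega^n+\int_A e^{\lambda(C_0-u_\ell)}\,\omega^n$ with $A=\{f_j>M\}$ and $g=f_j$: since $\omega^n(\{f_j>M\})\le V/M$ uniformly in $j$, the uniform integrability of $\{f_j\log f_j\}$ controls $\int_{\{f_j>M\}}(f_j\log f_j-f_j)\,\omega^n$ and the $L^2(\omega^n)$-bound on $\{e^{\lambda(-u_\ell)}\}$ supplied by (c) controls $\int_{\{f_j>M\}}e^{\lambda(-u_\ell)}\,\omega^n$, both $\to0$ as $M\to\infty$ uniformly in $j,k,\ell$. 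Thus $\int_X|u_j-u_k|\,\omega_{u_j}^n\le M\int_X|u_j-u_k|\,\omega^n+\eta(M)$ with $\eta(M)\to0$; choosing $M$ large, then $j,k$ large, makes this arbitrarily small, and symmetrically for $\omega_{u_k}^n$. The genuine difficulty that this estimate is designed to overcome is that the potentials $u_j$ are unbounded while the measures $\omega_{u_j}^n$ carry only $L\log L$ densities and may concentrate; it is the interplay of the uniform Skoda estimate with the upgrade — via Scheff\'e and, crucially, the \emph{convergence} of entropies — of the $L\log L$ bound on $\{f_j\}$ to genuine uniform integrability of $\{f_j\log f_j\}$ that defeats this concentration.
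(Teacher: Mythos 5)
Your argument is correct, and while it rests on the same two pillars as the paper's proof --- the entropy--compactness theorem of \cite{bbegz} and the use of entropy \emph{convergence} to upgrade weak to strong $L^1$ convergence of the densities --- it packages both steps differently. For (ii)$\Rightarrow$(iv) the paper approximates $f=\o_u^n/\o^n$ by smooth positive $\tilde f_k$ and runs the K\"ullback--Pinsker inequality against $\tilde f_k$, passing to the limit in $\int f_j\log\tilde f_k\,\o^n$ by weak convergence of measures; your Dunford--Pettis plus Visintin strict-convexity argument is a clean, prepackaged version of exactly this idea and is a legitimate substitute. The more substantive divergence is at (i)$\Rightarrow$(iii): the paper gets this (and (ii)$\Rightarrow$(iii)) in three lines from the $d_1$-precompactness of entropy-bounded sets \cite[Theorem 2.17]{bbegz}, identifying the subsequential $d_1$-limit with $u$ via uniqueness; you instead verify the Cauchy criterion \eqref{eq: Mabuchimetricomparison} by hand, splitting on $\{f_j\le M\}$, and controlling the tail via Young's inequality, the uniform Skoda estimate, and the uniform integrability of $\{f_j\log f_j\}$ extracted from Scheff\'e's lemma. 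This estimate is sound (the only cosmetic point being that the $L^2$ bound on $e^{-\lambda u_\ell}$ requires invoking Skoda at exponent $2\lambda$, and that the Cauchy characterization must be taken in its extension to $\mathcal E^1$ from \cite{da2} rather than literally as stated for $\mathcal H_\o$ in Theorem \ref{thm: Mabuchicompletion}). Note, however, that it is logically redundant in your own scheme: your bridge (i)$\Leftrightarrow$(ii) already invokes the \cite{bbegz} continuity/compactness statement, and that same statement delivers (i)$\Rightarrow$(iii) immediately, as the paper observes. What your longer route buys is an explicit, quantitative mechanism for why entropy control defeats concentration of $\o_{u_j}^n$ against the unbounded potentials --- at the price of using entropy convergence where the paper's route needs only entropy boundedness; what the paper's route buys is brevity, at the price of leaning harder on the cited compactness theorem.
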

An immediate application of the equivalence between (iii) and (iv) in the last theorem and \cite[Theorem 1.2, Theorem 1.11]{bdl} is a new convergence result for the weak Calabi flow. This weak flow is a generalization of the classical smooth Calabi flow \cite{clb2,cc} (that is governed by the equation $\frac{d}{dt}c_t = S_{\o_{c_t}}-\bar S$) and was initially introduced and studied by Streets in the context of the abstract metric completion $\overline{(\mathcal H,d_2^M)}$ \cite{st1,st2}. A better understanding of this latter space in \cite{da1} led to more precise long time convergence and asymptotics results for the weak Calabi flow in \cite{bdl} and for more details, related terminology and historic references we refer to this last paper. We note the following corollary, which is a direct consequence of Theorem \ref{thm: eqvonvergence} above and \cite[Theorem 1.5, Theorem 1.11]{bdl}:

\begin{corollary} Suppose $p \geq 1$ and there exists a constant scalar curvature metric in $\mathcal H$. Then, given  any weak Calabi flow trajectory $[0,\infty) \ni t \to c_t \in \mathcal E^2$, there exists a constant scalar curvature potential $c_\infty \in \mathcal H_\o$ such that $d_{p,1}^C(c_t,c_\infty) \to 0$, in particular $\int_X \big|\frac{\o_{c_t}^n}{\o^n} - \frac{\o_{c_\infty}^n}{\o^n}\big| \o^n \to 0$.
\end{corollary}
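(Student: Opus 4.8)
The plan is to deduce the statement by feeding the long-time behaviour of the weak Calabi flow from \cite{bdl} into Theorem \ref{thm: eqvonvergence}. I would begin by invoking \cite[Theorem 1.5]{bdl}: since a constant scalar curvature metric exists in $\mathcal H$, there is a (smooth) constant scalar curvature potential $c_\infty \in \mathcal H_\o$ with $d_2^M(c_t, c_\infty) \to 0$. Because $\o_u^n/V$ is a probability measure, Jensen's inequality gives $\|\phi\|_{1,u} \le \|\phi\|_{2,u}$ for all $\phi \in T_u \mathcal H_\o$, so $d_1^M \le d_2^M$ on $\mathcal H_\o$, and by density on the completion $\mathcal E^2$ as well; hence $d_1^M(c_t, c_\infty) \to 0$. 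This is condition (iii) of Theorem \ref{thm: eqvonvergence} (read along the continuous parameter $t$, equivalently along every sequence $t_j \to \infty$).

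The next step is to check the standing hypothesis of Theorem \ref{thm: eqvonvergence}, namely $\textup{Ent}(\o^n, \o_{c_t}^n) \to \textup{Ent}(\o^n, \o_{c_\infty}^n)$, and here \cite[Theorem 1.11]{bdl} is the key input. Along the weak Calabi flow the Mabuchi K-energy $\mathcal M$ is non-increasing and, a cscK metric existing, bounded below; thus $\mathcal M(c_t)$ decreases to a finite limit, and in particular $\mathcal M(c_t) < \infty$ for $t > 0$, which places each such $c_t$ in $\mathcal E^\textup{Ent}$ (and $c_\infty \in \mathcal E^\textup{Ent}$ since it is smooth). Writing $\mathcal M$ via the Chen--Tian decomposition as $\textup{Ent}(\o^n, \o_\cdot^n)$ plus pluripotential energy functionals that are continuous along $d_1^M$-convergent sequences, and combining the $d_1^M$-convergence $c_t \to c_\infty$ with the $d_1^M$-lower semicontinuity of $\mathcal M$ and the minimizing property of the cscK potential $c_\infty$, one gets $\mathcal M(c_t) \to \mathcal M(c_\infty)$ and therefore the required entropy convergence.

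With condition (iii) and the entropy hypothesis in hand, Theorem \ref{thm: eqvonvergence} applies (with $u_j = c_{t_j}$, $u = c_\infty$) and delivers (iv): $d_{p,1}^C(c_t, c_\infty) \to 0$ for every $p \ge 1$. The displayed $L^1$-convergence of densities is then immediate from the $d_{p,1}^C$-convergence characterization \eqref{eq: Calabimetricomparison} extended to $\mathcal E^{L^1}$ as in Remark \ref{rem: alld^M_peqv}: for $p = 1$ it states that $d_{1,1}^C(c_t, c_\infty) \to 0$ is equivalent to $\int_X \big| \o_{c_t}^n/\o^n - \o_{c_\infty}^n/\o^n \big|\, \o^n \to 0$.

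The hard part is the entropy convergence of the second step: Theorem \ref{thm: eqvonvergence} is useless without it, and unlike the other ingredients it is not formal — it relies on the monotonicity and lower boundedness of the K-energy along the weak Calabi flow and on the smoothness and minimality of the limit $c_\infty$, all of which are imported from \cite{bdl}. The comparison $d_1^M \le d_2^M$ and the pluripotential characterization of $d_{1,1}^C$-convergence are routine applications of results stated above.
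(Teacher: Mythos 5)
Your proposal is correct and follows essentially the same route as the paper, which states the corollary as a direct consequence of Theorem \ref{thm: eqvonvergence} together with \cite[Theorem 1.5, Theorem 1.11]{bdl}: the flow $d_1^M$-converges to a cscK potential, the entropy converges along the trajectory, and then the implication (iii)$\Rightarrow$(iv) plus the characterization \eqref{eq: Calabimetricomparison} finishes the argument. Your reconstruction of the entropy convergence from the monotonicity, lower boundedness and $d_1^M$-lower semicontinuity of the K-energy is precisely the content imported from \cite[Theorem 1.11]{bdl}, so it adds detail rather than a new idea.
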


\section{The $L^{p,q}$-Calabi geometry of $\mathcal H_\o$}

As we will see, $\mathcal H_\o$ equipped with the $L^{p,q}$-Calabi Finsler structure can be embedded isometrically into $L^p(X,\o^n)$. For this reason, we focus on the ``flat" geometry of $L^p(X,\o^n)$ in the next short subsection.

\subsection{$L^p$-geometry on $L^{p/q}$-spheres}

Let $1 \leq q \leq p < \infty$ and $X$ be a compact manifold with a positive Borel measure $\mu$ satisfying $\mu(X)=\int_X \mu < \infty$. As a Fr\'echet manifold, $C^\infty(X)$ can be equipped with the  trivial $L^p$-Finsler structure:
\begin{equation}\label{eq: trivLpmetric}
\| \psi \|_{p,f} = \Big(\frac{1}{\mu(X)}\int_X |\psi|^p\mu\Big)^{1/p}, \ \psi \in T_f{C^\infty(X)} \simeq C^\infty(X), \ f \in C^\infty(X).
\end{equation}
It is a classical fact that straight segments joining various points of $C^\infty(X)$ are geodesics for this metric. The $L^{p/q}$-sphere with radius $r$ is denoted by:
$$\Bbb S_{L^{p/q}}(\mu,r) = \{f \in C^\infty(X) \textup{ s.t. } \frac{1}{\mu(X)}\int_X |f|^{p/q} \mu =r^{p/q}\}.$$
Unfortunately, for most $p$ and $q$, the $L^{p/q}$-sphere is not even a smooth submanifold of $C^\infty(X)$, however if we restrict to the ``octant" $\Bbb S_{L^{p/q}}^+(\mu,r)=\Bbb S_{L^{p/q}}(\mu,r) \cap \{ f > 0\}$, we do get a smooth submanifold.  As such, one can pullback the $L^p$-Finsler metric of \eqref{eq: trivLpmetric} to $\Bbb S_{L^{p/q}}^+(\mu,r)$, and study the resulting path length metric space $(\Bbb S_{L^{p/q}}^+(\mu,r), d_{p,q}^{\Bbb S^+})$. We will need the following basic result in this direction, which roughly says that the ``chordal metric" is equivalent to the ``round metric" on $\Bbb S^+_{L^{p/q}}(\mu,r)$:
\begin{proposition} Fix $f \in \Bbb S_{L^{p/q}}^+(\mu,r)$. Then there exists $C:=C(\mu,p,q,f,r) > 0$ such that for any $f_0,f_1 \in \Bbb S^{+}_{L^{p/q}}(\mu,r)$ the following holds:
\begin{equation}\label{eq: cordalsphericalmetriccomp}
\frac{C}{d^{\Bbb S^+}_{p,q}(f,f_0)+d^{\Bbb S^+}_{p,q}(f,f_1) + 1} d_{p,q}^{\Bbb S^+}(f_0,f_1) \leq \Big(\frac{1}{\mu(X)}\int_X |f_0-f_1|^p\mu\Big)^{1/p}\leq d_{p,q}^{\Bbb S^+}(f_0,f_1).
\end{equation}
\end{proposition}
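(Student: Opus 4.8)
The plan is to prove the two inequalities in \eqref{eq: cordalsphericalmetriccomp} separately; the right-hand one is formal, and the left-hand one is the real content. For the right inequality, recall that the trivial $L^p$-Finsler metric \eqref{eq: trivLpmetric} on $C^\infty(X)$ does not depend on the base point, so the length of any $C^1$ curve $\sigma:[0,1]\to C^\infty(X)$ equals $\int_0^1\|\sigma'(t)\|_{L^p}\,dt\geq\|\sigma(1)-\sigma(0)\|_{L^p}$, by the triangle inequality applied to the Bochner integral $\sigma(1)-\sigma(0)=\int_0^1\sigma'(t)\,dt$. Since every admissible path for $d^{\Bbb S^+}_{p,q}(f_0,f_1)$ is in particular such a curve, this gives at once $d^{\Bbb S^+}_{p,q}(f_0,f_1)\geq\big(\frac1{\mu(X)}\int_X|f_0-f_1|^p\mu\big)^{1/p}$.

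For the left inequality I would use \emph{radial projection} onto the sphere. Normalize so that $\mu(X)=1$ and set $N(g):=\big(\int_X g^{p/q}\mu\big)^{q/p}$ for $g>0$, so that $\Bbb S^+_{L^{p/q}}(\mu,r)=\{g\in C^\infty(X):g>0,\ N(g)=r\}$. Given $f_0,f_1\in\Bbb S^+_{L^{p/q}}(\mu,r)$, the straight segment $\gamma(t)=(1-t)f_0+tf_1$ stays in the positive cone and, being a convex combination of two strictly positive continuous functions on compact $X$, is bounded below by a positive constant uniformly in $t$; hence $t\mapsto N(\gamma(t))$ is smooth and positive, and $\tilde\gamma(t):=r\,\gamma(t)/N(\gamma(t))$ is a smooth curve in $\Bbb S^+_{L^{p/q}}(\mu,r)$ from $f_0$ to $f_1$. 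Thus $d^{\Bbb S^+}_{p,q}(f_0,f_1)\leq\int_0^1\|\tilde\gamma'(t)\|_{L^p}\,dt$, and the problem reduces to a pointwise bound on $\|\tilde\gamma'(t)\|_{L^p}$. Writing $\tilde\gamma'=r\gamma'/N(\gamma)-r\gamma\,N(\gamma)'/N(\gamma)^2$, and using $\gamma'=f_1-f_0$ together with $\|\gamma(t)\|_{L^p}\leq\max(\|f_0\|_{L^p},\|f_1\|_{L^p})=:M$, it suffices to establish two analytic facts: $N(\gamma(t))\geq r/2$ and $|N(\gamma(t))'|\leq\|f_1-f_0\|_{L^p}$.

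The lower bound follows from super-additivity of $\xi\mapsto\xi^{p/q}$ on $[0,\infty)$ (valid since $p/q\geq1$): $\int\gamma(t)^{p/q}\mu\geq\int((1-t)f_0)^{p/q}\mu+\int(tf_1)^{p/q}\mu=\big((1-t)^{p/q}+t^{p/q}\big)r^{p/q}\geq(1/2)^{p/q}r^{p/q}$, whence $N(\gamma(t))\geq r/2$. The derivative bound is where the hypotheses on $p,q$ genuinely enter: differentiating $N(\gamma)^{p/q}=\int\gamma^{p/q}\mu$ gives $N(\gamma)'=N(\gamma)^{1-p/q}\int\gamma^{p/q-1}(f_1-f_0)\mu$, and H\"older with exponents $p$ and $p'=p/(p-1)$ bounds the integral by $\|\gamma^{p/q-1}\|_{L^{p'}}\|f_1-f_0\|_{L^p}$; the key arithmetic is $(p/q-1)p'=(p/q-1)\tfrac{p}{p-1}\leq p/q$, which holds precisely when $q\geq1$, so on the probability space $\mu$ one gets $\|\gamma^{p/q-1}\|_{L^{p'}}=\|\gamma\|_{L^{(p/q-1)p'}}^{p/q-1}\leq\|\gamma\|_{L^{p/q}}^{p/q-1}=N(\gamma)^{p/q-1}$, and the two powers of $N(\gamma)$ cancel. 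Combining everything yields $\|\tilde\gamma'(t)\|_{L^p}\leq(2+4M/r)\|f_1-f_0\|_{L^p}$, hence $d^{\Bbb S^+}_{p,q}(f_0,f_1)\leq(2+4M/r)\|f_1-f_0\|_{L^p}$. To finish, I would absorb $M$ using the right inequality already proved, applied to $(f,f_0)$ and $(f,f_1)$: $\|f_i\|_{L^p}\leq\|f\|_{L^p}+\|f_i-f\|_{L^p}\leq\|f\|_{L^p}+d^{\Bbb S^+}_{p,q}(f,f_i)$, so $2+4M/r\leq C'\big(1+d^{\Bbb S^+}_{p,q}(f,f_0)+d^{\Bbb S^+}_{p,q}(f,f_1)\big)$ for a suitable $C'=C'(\mu,p,q,f,r)$, and $C:=1/C'$ gives \eqref{eq: cordalsphericalmetriccomp}.

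The step I expect to be the main obstacle is the derivative estimate $|N(\gamma(t))'|\leq\|f_1-f_0\|_{L^p}$: it rests entirely on the exponent inequality $(p/q-1)\tfrac{p}{p-1}\leq\tfrac pq$, i.e.\ on $q\geq1$, and without this the projected curve cannot be controlled by the chord. A minor additional point to check carefully is that $\tilde\gamma$ is a legitimate competitor for $d^{\Bbb S^+}_{p,q}$ — that is, $C^1$, positive, and lying on the sphere — which again uses that $\gamma(t)$ is uniformly bounded away from $0$ on the compact $X$, making $N(\gamma(\cdot))$ smooth and positive.
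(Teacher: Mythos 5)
Your proof is correct and follows essentially the same route as the paper: both establish the right-hand inequality by noting that chords are $L^p$-geodesics in the ambient flat space, and both prove the left-hand inequality by radially projecting the straight segment onto $\Bbb S^+_{L^{p/q}}(\mu,r)$ and bounding the speed of the projected curve, using a H\"older estimate that hinges on $1\leq q\leq p$ together with the lower bound $\|f_t\|_{p/q}\geq r/2$. The only cosmetic differences are your choice of H\"older exponents ($p$ and $p/(p-1)$, versus the paper's $p/q$ and its conjugate followed by $\|\cdot\|_{p/q}\leq C\|\cdot\|_p$) and your absorption of $\max(\|f_0\|_p,\|f_1\|_p)$ via the already-proved chord bound rather than the paper's direct triangle inequality against $f$; neither changes the substance.
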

\begin{proof} The inequality $(\frac{1}{\mu(X)}\int_X |f_0-f_1|^p\mu)^{1/p}\leq  d_{p,q}^{\Bbb S^+}(f_0,f_1) $ follows from the fact that $[0,1] \ni t \to f_t := f_0 + t(f_1-f_0) \in C^\infty(X)$ is a geodesic in $C^\infty(X)$  with respect to the $L^p$-Finsler metric and has length equal to $(\frac{1}{\mu(X)}\int_X |f_0-f_1|^p\mu)^{1/p}$. Any curve joining $f_0,f_1$ inside $\Bbb S^{+}_{L^{p/q}}(\mu,r)$ has to have length less than $t \to f_t$.  

For the other inequality, we will estimate the length of the curve
$$[0,1] \ni t \to \alpha_t := \frac{rf_t}{\| f_t\|_{p/q}}=\frac{r(f_0 + t(f_1 - f_0))}{\| f_0 + t(f_1 - f_0)\|_{p/q}} \in \Bbb S_{L^{p/q}}^+(\mu,r),$$ 
joining $f_0,f_1$. Note that the denominator of the expression above is nonzero, as $f_0,f_1>0$. Using that 
$$\dot \alpha_t = \frac{r(f_1 - f_0)}{\| f_t\|_{p/q}} -\frac{ rf_t }{\| f_t\|_{p/q}^{p/q+1}} \int_X (f_1 - f_0) f_t^{p/q-1} d\mu,$$ 
we have the following sequence of estimates:
\begin{flalign*}
\int_0^1 \| \dot \alpha_t\|_pdt &\leq \int_0^1\frac{r\|f_1 - f_0 \|_{p}}{\| f_t\|_{p/q}}dt + \int_0^1 \frac{r\| f_t\|_{p}}{\| f_t\|^{p/q+1}_{p/q}}\int_X |f_1 - f_0|f_t^{p/q-1}d\mu dt\\
&\leq \int_0^1\frac{r\|f_1 - f_0 \|_{p}}{\| f_t\|_{p/q}}dt + \int_0^1\frac{r\| f_t\|_{p}}{\| f_t\|^{p/q+1}_{p/q}} \|f_1 - f_0 \|_{p/q} \|f_t\|_{p/q}^{p/q-1}dt \\
&\leq r\|f_1 - f_0 \|_{p} \int_0^1\frac{1}{\| f_t\|_{p/q}}dt + C'(\mu,p,q)\|f_1 - f_0 \|_{p} \int_0^1\frac{\| f_t\|_p}{\| f_t\|_{p/q}^2}dt\\
&\leq r\|f_1 - f_0 \|_{p} \int_0^1\frac{1}{\| f_t\|_{p/q}} + C'(\mu,p,q)\frac{(1-t)\| f-f_0\|_p + t\| f -f_1 \|_p + \| f\|_p}{\| f_t\|_{p/q}^2}dt\\
&\leq C(\mu,p,q,r,f)(d^{\Bbb S^+}_{p,q}(f,f_0)+d^{\Bbb S^+}_{p,q}(f,f_1) + 1) \|f_1 - f_0 \|_{p},
\end{flalign*}
where to obtain the second line we have used the H\"older inequality with exponents $p/q\geq 1$ and $(p/q)/(p/q-1) \geq 1$ in the last integrand. To get the third line, we have used that $\| f_0 -f_1\|_{p/q} \leq C'(\mu,p,q) \|f_0 - f_1 \|_{p}$. For the fourth line, we have used the triangle inequality for the $L^p$-norm. To get the last line, we have used that $\|f -f_0 \|_{p} \leq d_{p,q}^{\Bbb S^+}(f,f_0), \ \|f -f_1 \|_{p} \leq d_{p,q}^{\Bbb S^+}(f,f_1)$, $f_t \geq f_0/2 \geq 0$ for $t \in [0,1/2]$ and $f_t \geq f_1/2 \geq 0$ for $t \in [1/2,1]$, hence $\| f_t\|_{p/q} \geq r/2$ for all $t \in [0,1]$.  To finish the proof, we conclude:
$$d_{p,q}^{\Bbb S^+}(f_0,f_1) \leq \int_0^1 \| \dot \alpha_t\|_p dt \leq C(\mu,p,q,f)(d^{\Bbb S^+}_{p,q}(f,f_0)+d^{\Bbb S^+}_{p,q}(f,f_1) + 1) \|f_1 - f_0 \|_{p}.$$
\end{proof}

\subsection{Proof of Theorem \ref{thm: Calabicompletion} and Theorem \ref{thm: KRconvergence}}

\begin{proof}[Proof of Theorem \ref{thm: Calabicompletion}] 
To start the proof, we notice that for arbitrary $1 \leq q \leq p < \infty$ the infinite-dimensional Finsler manifolds $(\mathcal H_\o,\| \cdot \|^C_{p,q,(\cdot)})$ and $(\Bbb S^+_{L^{p/q}}(\o^n,p/q),\| \cdot \|_{p,(\cdot)})$ are isometric via the map $F: \mathcal H_\o \to \Bbb S^+_{L^{p/q}}(\o^n,p/q)$, given by the formula
$$F(u) := \frac{p}{q}\Big(\frac{\o_u^n}{\o^n}\Big)^{\frac{q}{p}}.$$
By the Calabi--Yau theorem, the map $F$ is bijective. As $F(u)_*(\delta u) = ( {\o_u^n}/{\o^n})^{q/p}\Delta_{\o_u}\delta u$, by inspection we see that $F^*\| \cdot \|_{p,F(\cdot)}=\| \cdot \|^C_{p,q,(\cdot)}$. All this implies that
$$d_{p,q}^C(u_0,u_1) = d_{p,q}^{\Bbb S^+}(F(u_0),F(u_1)),$$
in particular, \eqref{eq: cordalsphericalmetriccomp} gives that $d_{p,q}^C$ is indeed a metric on $\mathcal H_\o$. From \eqref{eq: cordalsphericalmetriccomp} it also follows that $\{ u_j\}_j \subset \mathcal H_\o$ is $d_{p,q}^C$-Cauchy if and only if
$$\int_X \Big|\Big(\frac{\o_{u_j}^n}{\o^n}\Big)^{q/p} - \Big(\frac{\o_{u_k}^n}{\o^n}\Big)^{q/p}\Big|^p \o^n \to 0, \ j,k \to \infty.$$ 
Using this, Lemma \ref{lemma: Vitali} below implies that $\{ u_j\}_j$ is $d_{p,q}^C$-Cauchy if and only if \eqref{eq: Calabimetricomparison} holds. The identification $\overline{(\mathcal H_\o,d_{p,q}^C)}=\mathcal E^{L^q}$ readily follows as well.

Lastly, we focus on the case $p=2,q=1$ extensively treated in \cite{clm,cr}. As observed in \cite[Theorem 1.1]{clm} (see also the discussion following \cite[Remark 4.2]{cr}), the Riemannian space $(\mathcal H_\o,\| \cdot \|_{2,1,(\cdot)}^C)$ has positive constant sectional curvature equal to $1/4$, what is more, any two points of $\mathcal H_\o$ can be joined by a Riemannian geodesic. Roughly, this follows from the fact that $(\mathcal H_\o,\| \cdot \|^C_{2,1,(\cdot)})$ is isometric to $(\Bbb S^+_{L^2}(\o^n,2),\| \cdot \|_{2,(\cdot)})$, which is a totally geodesic open subset of an infinite-dimensional sphere with radius $2$. 

Given $u,v,w \in \mathcal H_\o$, let $U=F(u),V=F(v),W=F(w) \in \Bbb S^+_{L^2}(\o^n)$. Also let $\mathcal V \subset L^2(X,\o^n)$ be the 3 dimensional subspace spanned by $U,V,W$ and $\Bbb S_{UVW}= \mathcal V \cap \Bbb S^+_{L^2}(\o^n,2)$. Together with the induced Riemannian metric, $\Bbb S_{UVW}$ is isometric to an open subset of the 2-dimensional round sphere with its round metric, hence it has constant sectional curvature $1/4$. The geodesic triangle $UVW$ of $\Bbb S^+_{L^2}(\o^n,2)$, with edges at $U,V,W$ lies inside $\Bbb S_{UVW}$ \cite[Theorem 1.4]{clm}. As $\Bbb S_{UVW}$ is a model space with constant scalar curvature equal to $1/4$, the geodesic triangle $UVW$ (lying inside it) has to satisfy the CAT(1/4) inequality \cite{bh}, ultimately giving that $(\mathcal H_\o,d_2^C)$ is a CAT(1/4) space. 

To finish the proof, we can use \cite[Corollary 3.11, p. 187]{bh} to conclude that the metric completion $\overline{(\mathcal H_\o,d_2^C)}=(\mathcal E^{L^1},d_2^C)$ is a CAT(1/4) geodesic metric space as well.
\end{proof}

As promised in the above proof, let us state the following measure theoretic lemma, whose proof uses the classical Vitali convergence theorem \cite[Theorem 8.5.14]{ra}, and is exactly the same as the argument of \cite[Lemma 5.3]{cr}: 
\begin{lemma} \label{lemma: Vitali} Suppose $f_j,f \in L^q(X,\o^n)$ with $f_j,f \geq 0$. Then $\|f_j -f \|_{L^q} \to 0$ if and only if $\| f_j^{q/p} -{f}^{q/p}\|_{L^p}\to 0$.
\end{lemma}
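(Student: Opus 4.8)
The claim is the equivalence $\|f_j-f\|_{L^q}\to 0 \iff \|f_j^{q/p}-f^{q/p}\|_{L^p}\to 0$ for nonnegative $f_j,f\in L^q(X,\o^n)$, where $1\le q\le p<\infty$. Write $r:=q/p\in(0,1]$. First I would dispose of the trivial case $r=1$ (i.e.\ $p=q$), where there is nothing to prove. So assume $r\in(0,1)$. The plan is to run both implications through the Vitali convergence theorem, which characterizes $L^s$-convergence of a sequence that converges in measure as: convergence in measure together with uniform integrability of the $s$-th powers (on a finite measure space). The common thread is that $t\mapsto t^r$ is a homeomorphism of $[0,\infty)$, so $f_j\to f$ in measure is equivalent to $f_j^r\to f^r$ in measure; after passing to a subsequence one may even assume pointwise a.e.\ convergence, so this equivalence is cheap. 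The real content is transferring uniform integrability of $\{f_j^q\}$ across to $\{(f_j^r)^p\}=\{f_j^q\}$ — but note these are literally the same family of functions! So the uniform-integrability hypotheses needed for the two applications of Vitali coincide, and the only thing to check is that each convergence hypothesis \emph{forces} that common uniform integrability, and forces convergence in measure.

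More concretely, for the forward direction assume $\|f_j-f\|_{L^q}\to 0$. Then $f_j\to f$ in $L^q$, hence in measure, hence $f_j^r\to f^r$ in measure; and since $f_j^q=(f_j^r)^p$ is $L^q$-convergent (to $f^q$, in $L^1$), the sequence $\{(f_j^r)^p\}$ is uniformly integrable (an $L^1$-convergent sequence is uniformly integrable — this is one half of Vitali, or can be seen directly via absolute continuity of the integrals of a convergent sequence). Applying Vitali to $g_j:=f_j^r$, $g:=f^r$ with exponent $p$ yields $\|f_j^r-f^r\|_{L^p}\to 0$. The reverse direction is symmetric: if $\|f_j^r-f^r\|_{L^p}\to 0$, then $f_j^r\to f^r$ in measure, so $f_j=(f_j^r)^{1/r}\to f$ in measure; and $\|f_j^q-f^q\|_{L^1}=\|(f_j^r)^p-(f^r)^p\|_{L^1}\to 0$ gives uniform integrability of $\{f_j^q\}$; Vitali with exponent $q$ on $f_j,f$ then gives $\|f_j-f\|_{L^q}\to 0$. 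One small point worth flagging: Vitali as usually stated applies to convergence in measure of the sequence itself, not just along subsequences — but convergence in $L^s$ (which we are \emph{deducing}) can be checked via the subsequence criterion, so it suffices to show every subsequence has a further subsequence converging in $L^s$, which is what the a.e.-subsequence plus uniform integrability delivers.

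\textbf{Main obstacle.} There is no serious obstacle; this is a soft measure-theory argument and, as the authors note, it is the same as \cite[Lemma 5.3]{cr}. The only mildly delicate point is bookkeeping around the direction of the nonlinearity: $t\mapsto t^r$ with $r<1$ is concave and $t\mapsto t^{1/r}$ is convex, but neither monotonicity nor any quantitative estimate is needed — one only uses that both are continuous with continuous inverses on $[0,\infty)$, so ``in measure'' passes back and forth freely. The other thing to be careful about is that uniform integrability of $\{f_j^q\}$ is exactly uniform integrability of $\{(f_j^r)^p\}$, so one does not need to prove anything to move the hypothesis between the two Vitali applications; recognizing this identity is what makes the proof a one-liner once Vitali is invoked.
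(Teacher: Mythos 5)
Your proof is correct and follows essentially the same route as the paper, which simply invokes the Vitali convergence theorem and refers to the identical argument in \cite[Lemma 5.3]{cr}. The key observation you isolate --- that $(f_j^{q/p})^p=f_j^q$, so the uniform integrability needed for both applications of Vitali is literally the same condition, while convergence in measure transfers through the homeomorphism $t\mapsto t^{q/p}$ of $[0,\infty)$ --- is exactly the content of that cited argument.
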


Lastly, we provide the following theorem, which contains Theorem \ref{thm: KRconvergence} as a particular case. We refer to \cite{da2,dh,mc} for analogous results on the $L^{P'}$-Mabuchi convergence of the K\"ahler--Ricci flow.

\begin{theorem}\label{thm: KRconvergence_general}Suppose $(X^n,\o)$ is Fano with $[\o]=-c_1(K_X)$ and $1\leq q \leq p \leq \infty$. Suppose $[0,\infty) \ni t \to r_t \in \mathcal H_\o$ is a K\"ahler-Ricci trajectory. Then the following are equivalent:
\vspace{-0.1in}
\begin{itemize}
\setlength{\itemsep}{1pt}
    \setlength{\parskip}{1pt}
    \setlength{\parsep}{1pt}
\item[(i)] There exists a K\"ahler--Einstein potential inside $\mathcal H_\o$.  
\item[(ii)] $t \to r_t$ converges $C^\infty$-exponentially fast to some K\"ahler--Einstein potential $r_\infty \in \mathcal H_\o$.
\item[(iii)] $t \to r_t$ has finite $d^C_{p,1}$-length, i.e., $\int_0^\infty \| n - S_{\o_{r_t}}\|_{L^p(X,({\o_{r_t}^n}/{\o^n})^q\o^n)} < \infty.$
\item[(iv)] $\{r_t\}_{t \geq 0} \subset \mathcal H_\o$ forms a $d^C_{p,q}$-Cauchy sequence.
\end{itemize}
\end{theorem}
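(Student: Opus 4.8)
The plan is to prove $(i)\Leftrightarrow(ii)$, $(ii)\Rightarrow(iii)$, $(iii)\Rightarrow(iv)$ and $(iv)\Rightarrow(i)$, so that all four statements are equivalent; everything beyond what is already in the K\"ahler--Ricci flow literature lies in the last implication. The whole argument is organized around one identity. Writing the normalized flow as $\dot r_t=h_{r_t}+k_t$, where $h_{r_t}$ is the Ricci potential of $\o_{r_t}$ (so $\Ric\,\o_{r_t}-\o_{r_t}=\i\ddbar h_{r_t}$, $\frac1V\int_X e^{h_{r_t}}\o_{r_t}^n=1$) and $k_t\in\RR$ is the constant with $\int_X\dot r_t\,\o^n=0$, take the $\o_{r_t}$-trace of the defining relation of $h_{r_t}$: since $\Delta$ kills $k_t$, $\Delta_{\o_{r_t}}\dot r_t=\Delta_{\o_{r_t}}h_{r_t}=S_{\o_{r_t}}-n$. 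Substituting into \eqref{eq: LpCalabiFinslerDef}, the $L^{p,q}$-Calabi length of $t\mapsto r_t$ equals $V^{-1/p}\int_0^\infty\|n-S_{\o_{r_t}}\|_{L^p(X,(\o_{r_t}^n/\o^n)^q\o^n)}\,dt$, so $(iii)$ is precisely the statement that $t\mapsto r_t$ has finite $L^{p,q}$-Calabi length. I also record the companion identity obtained by differentiating the volume density along the flow, $\partial_t(\o_{r_t}^n/\o^n)=(\Delta_{\o_{r_t}}\dot r_t)\,\o_{r_t}^n/\o^n=(S_{\o_{r_t}}-n)\,\o_{r_t}^n/\o^n$, used below.

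For $(i)\Leftrightarrow(ii)$ I would simply cite the convergence theory of the normalized flow on Fano manifolds (Phong--Song--Sturm--Weinkove when $X$ has no holomorphic vector fields; Tian--Zhu and Collins--Sz\'ekelyhidi in general): if a K\"ahler--Einstein metric exists the flow converges to one $C^\infty$-exponentially, and the converse is trivial. For $(ii)\Rightarrow(iii)$: a K\"ahler--Einstein metric in $c_1(X)$ has $S\equiv n$, so $C^\infty$-exponential convergence yields $\|n-S_{\o_{r_t}}\|_{C^0}=O(e^{-\delta t})$ while $\o_{r_t}^n/\o^n$ stays uniformly bounded; hence the integrand in $(iii)$ decays exponentially. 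For $(iii)\Rightarrow(iv)$ I use only that $d^C_{p,q}$ is a path length metric (Theorem \ref{thm: Calabicompletion}): $d^C_{p,q}(r_s,r_t)\le\int_s^t\|\dot r_\tau\|^C_{p,q,r_\tau}\,d\tau\to0$ as $s,t\to\infty$, so $\{r_t\}$ is $d^C_{p,q}$-Cauchy.

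The core is $(iv)\Rightarrow(i)$, where the Calabi-geometric input finally matters. By Theorem \ref{thm: Calabicompletion} and Remark \ref{rem: alld^M_peqv}, a $d^C_{p,q}$-Cauchy trajectory has $\o_{r_t}^n/\o^n$ convergent in $L^q(X,\o^n)$, say to $g\ge0$ with $\int_X g\,\o^n=V$. Integrating the companion identity over $[t,t+1]$, taking absolute values and applying Fubini gives $\int_t^{t+1}\|n-S_{\o_{r_s}}\|_{L^1(X,\o_{r_s}^n)}\,ds\ge\|\o_{r_{t+1}}^n/\o^n-\o_{r_t}^n/\o^n\|_{L^1(X,\o^n)}\to0$, so there is $t_j\to\infty$ with $\|n-S_{\o_{r_{t_j}}}\|_{L^1(X,\o_{r_{t_j}}^n)}\to0$. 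I would then invoke Perelman's a priori estimates for the flow (uniform $C^0$-bound on $h_{r_t}$, uniform diameter and non-collapsing/Poincar\'e bounds) --- no new estimates are needed. Integration by parts gives $\int_X|\del h_{r_t}|^2_{\o_{r_t}}\o_{r_t}^n=\int_X h_{r_t}(n-S_{\o_{r_t}})\o_{r_t}^n\le\|h_{r_t}\|_{C^0}\|n-S_{\o_{r_t}}\|_{L^1(X,\o_{r_t}^n)}$, so $\|\del h_{r_{t_j}}\|_{L^2(\o_{r_{t_j}}^n)}\to0$; the uniform Poincar\'e inequality together with the normalization of $h_{r_t}$ then force $h_{r_{t_j}}\to0$ in $L^2(X,\o_{r_{t_j}}^n)$, hence in measure. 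Feeding this and $\o_{r_{t_j}}^n/\o^n\to g$ into the identity $\o_{r_t}^n/\o^n=C_t e^{f_\o-r_t-h_{r_t}}$ ($f_\o$ the Ricci potential of $\o$, $C_t>0$ a normalizing constant confined by the $L^q$-convergence to a compact subset of $(0,\infty)$ along $\{t_j\}$, since that convergence rules out concentration or escape of mass), one obtains $r_{t_j}\to r_\infty\in\mathcal E$ in measure with $\o_{r_\infty}^n=C_\infty e^{f_\o-r_\infty}\o^n$, the K\"ahler--Einstein equation in the pluripotential sense. Regularity for the complex \MA equation then upgrades $r_\infty$ to a smooth K\"ahler--Einstein potential in $\mathcal H_\o$: for $q>1$ directly via Ko\l odziej's $L^\infty$-estimate, and for $q=1$ by first noting that a full-mass solution of the K\"ahler--Einstein equation automatically has finite energy (a Skoda/H\"older argument, using $e^{-r_\infty}\in L^{1+\epsilon}$) and then quoting the smoothness of finite-energy K\"ahler--Einstein metrics on Fano manifolds. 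This yields $(i)$ and closes the loop.

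The single genuine obstacle is thus $(iv)\Rightarrow(i)$, and within it the passage from $L^q$-control of the volume densities --- which on its own carries no information about the potentials $r_t$ --- to a smooth K\"ahler--Einstein limit; this is exactly where Perelman's estimates, and at the endpoint $q=1$ the finer pluripotential input, have to be brought in. The boundary case $p=\infty$ (the Phong--Song--Sturm--Weinkove regime) falls out of the same scheme, or is already available in the literature.
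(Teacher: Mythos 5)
Your reduction (i)$\Leftrightarrow$(ii)$\Rightarrow$(iii)$\Rightarrow$(iv) matches the paper (Perelman/Tian--Zhu/Phong--Song--Sturm--Weinkove/Collins--Sz\'ekelyhidi for the first step, exponential decay for the second, the path-length definition of $d^C_{p,q}$ for the third), and your identification of (iii) with finiteness of the $L^{p,q}$-Calabi length is exactly the observation the theorem rests on. The problem is your (iv)$\Rightarrow$(i). First, the extraction of times $t_j$ with $\|n-S_{\o_{r_{t_j}}}\|_{L^1(\o_{r_{t_j}}^n)}\to 0$ does not follow from what you wrote: the inequality
$$\int_t^{t+1}\|n-S_{\o_{r_s}}\|_{L^1(X,\o_{r_s}^n)}\,ds\ \ge\ \Big\|\frac{\o_{r_{t+1}}^n}{\o^n}-\frac{\o_{r_t}^n}{\o^n}\Big\|_{L^1(X,\o^n)}$$
is a \emph{lower} bound on the left-hand side, and the fact that the right-hand side tends to $0$ (which is all that (iv) provides) says nothing about the left-hand side. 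Under (iv) you have no control whatsoever on $\int\|n-S\|\,dt$ --- that is hypothesis (iii), which you are not entitled to in this implication --- so the Perelman/Poincar\'e chain that follows has no starting point. Second, even granting $h_{r_{t_j}}\to 0$ in measure and $\o_{r_{t_j}}^n/\o^n\to g$ in $L^q$, identifying $g=C_\infty e^{f_\o-r_\infty}$ almost everywhere does not yet give $\o_{r_\infty}^n=g\,\o^n$: the non-pluripolar Monge--Amp\`ere operator is not continuous under $L^1(X,\o^n)$-convergence of potentials, so you must separately rule out loss of mass of $\o_{r_{t_j}}^n$ in the limit (e.g.\ via the semicontinuity statement of \cite[Corollary 2.21]{begz} combined with $\int_X g\,\o^n=V$); as written this step is asserted rather than proved.

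For comparison, the paper's proof of (iv)$\Rightarrow$(i) is a short contradiction argument that bypasses all of this. If (iv) holds, Theorem \ref{thm: Calabicompletion} together with Remark \ref{rem: Calabiconvimpliespointwiseconv} gives a $d^C_{p,q}$-limit $r_\infty\in\mathcal E^{L^q}\subset\mathcal E$ which is also the $L^1(X,\o^n)$-limit of the potentials. If (i) fails, Rubinstein's theorem \cite[Theorem 1.3]{r} produces $t_j\to\infty$ along which the potentials converge to some $\psi\in\textup{PSH}(X,\o)$ with a proper multiplier ideal sheaf, hence (by Skoda) a positive Lelong number, hence $\psi\notin\mathcal E$ by \cite[Corollary 1.8]{gz}; but $\psi=r_\infty\in\mathcal E$ by uniqueness of $L^1$-limits, a contradiction. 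No Perelman estimates, uniform Poincar\'e inequalities, or Monge--Amp\`ere regularity theory are needed. Your direct construction, even if the two gaps above were repaired, would amount to reproving a substantial portion of the known convergence theory of the flow rather than exploiting the Calabi-geometric characterization of $\mathcal E^{L^q}$.
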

\begin{proof}
Let us first assume that $p \neq \infty$. If (i) holds then by results of Perelman, Tian--Zhu, Phong--Song--Sturm--Weinkove and Collins-Sz\'ekelyhidi \cite{tz,pssw,csz} imply that the K\"ahler--Ricci trajectory $t \to r_t$ converges $C^\infty$ exponentially fast to some K\"ahler--Einstein potential $r_\infty \in \mathcal H_\o$, hence (ii) holds. $C^\infty$-exponential convergence of $t \to r_t$ implies the finiteness of the integral in (iii). Condition (iii) implies (iv) trivially.

We are left to show that (iv) implies (i). The the ideas of  \cite[Corollary 6.7]{cr} apply again, but we give here a slightly different argument. Suppose (iv) holds but (i) does not. Let $r_\infty \in \mathcal E^{L^q}$ be the $d_{p,q}^C$-limit of $r_t$. It follows from the convergence criterion of \eqref{eq: Calabimetricomparison} and Remark \ref{rem: Calabiconvimpliespointwiseconv} below that we also have weak convergence on the level of potentials, namely $r_t \to_{L^1(X,\o^n)} r_\infty$. On the other hand, by \cite[Theorem 1.3]{r}, there exists $t_j \to \infty$ and $\psi \in \textup{PSH}(X,\o)$ such that $\psi$ has proper multiplier ideal sheaf, in particular by Skoda's theorem $\psi$ has non-zero Lelong number at some $x \in X$. By \cite[Corollary 1.8]{gz} this implies that $\psi \not \in \mathcal E$. But by uniqueness of $L^1(X,\o^n)$-limits, we have $\psi=r_\infty \in \mathcal E^{L^q} \subset \mathcal E$, a contradiction. 

Now we deal with the case $p=\infty$. Clearly, the directions (i)$\to$(ii)$\to$(iii)$\to$(iv) still hold. To prove that (iv) implies (i) we just need to notice that $d_{\infty}^C$-convergence trivially implies $d_{p,q}^C$-convergence for any $1 \leq q \leq p < \infty$.
\end{proof}
\section{$L^{p,q}$-Calabi vs. $L^{p'}$-Mabuchi  geometry}
\subsection{Proof of Theorem \ref{thm: MabuchiCalabinotcompatible}}
To prove (i), we recall first that $\mathcal E^{p'} \not \subset \mathcal E^{L^q}$. Indeed, we can choose $v_0,v_1 \in \mathcal H_\o$ such that the level set $\{ v_0-v_1=0\}$ does not contain critical points of $v_0-v_1$. Then a basic calculation yields that the bounded potential  $u=\max(v_0,v_1)-\int_X\max(v_0,v_1)\o^n$ satisfies $u \in \mathcal E^{p'} \setminus \mathcal E^{L^q}$, because $\o^n_u$ charges the hypersurface $\{ v_0=v_1\}$, a set of Lebesgue measure zero. 

Now let $u \in \mathcal E^{p'} \setminus \mathcal E^{L^q}$ arbitrary. By Theorem \ref{thm: Mabuchicompletion} there exists $u_j \in \mathcal H_\o$ such that $d_{p'}^M(u_j,u) \to 0$. This in particular gives that $\o^n_{u_j} \to \o^n_u$ weakly \cite[Theorem 5(i)]{da2}. We claim that $\{ u_j\}_j$ cannot contain a $d_{p,q}^C$-Cauchy subsequence. Indeed, if this were the case, then by Theorem \ref{thm: Calabicompletion} above, for some subsequence of $u_j$, again denoted by $u_j$, the densities $\o_{u_j}^n/\o^n$ would converge in $L^q(X,\o^n)$ to some $f \in L^q(X,\o)$. But as $\o^n_{u_j} \to \o^n_u$ weakly, this would imply that $\o^n_u/\o^n = f \in L^q(X,\o^n)$, a contradiction with $u \in \mathcal E^{p'} \setminus \mathcal E^{L^q}$. 

To argue (iii), we first show that $ \mathcal E^{L^1} \not \subset \mathcal E^{p'} $. This is again likely known to experts, however we could find an exact reference, so we give a construction allowing a great amount of flexibility. Let $u \in \mathcal E^{p'}$, $u \leq -1$ and unbounded such that for each set $U_k = \{ k < |u|^{p'} \leq k+1\}$ we have $\o^n(U_k)>0, \ k\geq 1$. By the construction in \cite[Example 2.14]{gz} (see also \cite[Proposition 5]{bl2}), such $u$ can be found. We introduce $f \in L^1(X,\o^n)$:
$$f(x) = \sum_{k \geq 1} \frac{6V}{(\pi k)^2 \o^n(U_k)}\mathbbm{1}_{U_k}(x).$$
Clearly $f \in L^1(X)$ with $\int_X f \o^n =V$, hence by \cite[Theorem A]{gz} there exists $v \in \mathcal E^{L^1}$ such that $\o_v^n = f \o^n$. We claim that $v \notin \mathcal E^{p'}.$ Indeed, if this were not true, then \cite[Theorem C]{gz} would give that
$$\infty=\frac{6V}{\pi^2}\sum_{k \geq 1} \frac{1}{k} \leq \int_X |u|^{p'} \o_v^n < \infty,$$
a contradiction. Finally, as $v\in \mathcal E^{L^1}\setminus \mathcal E^{p'},$ the same argument as in the previous step  yields now a $d^C_{p,1}$-Cauchy sequence $\{v_j\}_j \subset \mathcal H_\o$ for which $d^C_{p,1}(v_j,v) \to 0$, without any $d^M_{p'}$-Cauchy subsequences.

Finally, to argue (ii), we have to use jointly the $d^C_{p,q}$-convergence criteria \eqref{eq: Calabimetricomparison} and the estimates of Kolodziej \cite[p. 668]{k}, according to which $d^C_{p,q}$-convergence implies $C^0$-convergence of potentials. According to the  $d^M_{p'}$-convergence criteria \eqref{eq: Mabuchimetricomparison}, $C^0$-convergence in turn implies $d^M_{p'}$-convergence, finishing the proof.
 
\subsection{Proof of Theorem \ref{thm: eqvonvergence}}

The following basic consequence of the dominated convergence theorem will be used shortly:

\begin{lemma} Suppose $f \geq 0$ such that $\int_X f \log(f) \o^n < \infty$. Then there exists $\tilde f_k \in C^\infty(X)$ such that $\tilde f_k >0$, $\int_X |f - \tilde f_k|\o^n \to 0$ and $\int_X f(\log(f) - \log( \tilde f_k))\o^n \to 0$.
\end{lemma}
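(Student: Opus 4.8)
The plan is to approximate $f$ in two nested stages: first truncate, then mollify. Let me describe the key steps.

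\textbf{Step 1: Truncation.} For a parameter $N>0$, set $f_N := \min(\max(f, 1/N), N)$, so that $1/N \le f_N \le N$ and $f_N \to f$ pointwise $\o^n$-a.e. as $N \to \infty$. Since $0 \le f_N \le f + 1 \in L^1(X,\o^n)$, the dominated convergence theorem gives $\int_X |f - f_N|\o^n \to 0$. For the entropy term, note that $x \mapsto x\log x$ is bounded below (by $-1/e$) and that $f\log f_N$ is dominated: when $f \le 1$ we have $|f \log f_N| \le |\log f_N| \le \max(\log N, |\log(1/N)|)$ — which is not uniformly bounded, so instead I would argue directly that $f\log f_N \to f \log f$ pointwise and dominate $f\log f_N$ by $|f \log f| + f/e + 1$ (splitting according to whether $f_N \ge 1$ or $f_N < 1$, using $f\log f_N \le f\log f$ on the first region and $|f \log f_N| = f|\log f_N| \le f|\log f|$ on the region $f<1$ where $f_N=f$, plus a bounded error from the region $1/N \le f < 1$ — here more care is needed but the integrand on $\{f < 1/N\}$ is $f\log(1/N) \to 0$ in $L^1$ by absolute continuity since $\o^n(\{f<1/N, f>0\})$ times $\log N$ must be controlled). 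Cleanest: write $\int_X f(\log f - \log f_N)\o^n = \int_{\{f>N\}} f\log(f/N)\o^n + \int_{\{f<1/N\}} f\log(fN)\o^n$; the first integral is $\le \int_{\{f>N\}} f\log f\,\o^n \to 0$ (tail of an integrable function, since $\log(f/N)\le \log f$ there), and the second has absolute value $\le \int_{\{f < 1/N\}} f|\log f|\,\o^n + \log N \cdot \int_{\{f<1/N\}} f\,\o^n$, where the first term $\to 0$ as a tail and the second $\to 0$ because $\int_{\{f<1/N\}} f\,\o^n \le (1/N)\o^n(X)$, killing the $\log N$ factor. So the truncated functions $f_N$ satisfy both conclusions.

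\textbf{Step 2: Mollification of a bounded positive function.} It now suffices to approximate a \emph{fixed} $g$ with $0 < 1/N \le g \le N$ by smooth strictly positive $\tilde g_k$ with $\int_X |g - \tilde g_k|\o^n \to 0$ and $\int_X g(\log g - \log \tilde g_k)\o^n \to 0$. Standard convolution in coordinate charts (partition of unity) or heat-kernel smoothing produces $\tilde g_k \in C^\infty(X)$ with $1/N \le \tilde g_k \le N$ (mollification preserves these bounds since it is an average) and $\tilde g_k \to g$ in $L^1(X,\o^n)$, hence along a subsequence also $\o^n$-a.e. Since $\log$ is Lipschitz on $[1/N, N]$, we get $|\log g - \log \tilde g_k| \le N|g - \tilde g_k|$, so $\int_X g|\log g - \log \tilde g_k|\,\o^n \le N^2 \int_X |g - \tilde g_k|\,\o^n \to 0$. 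A diagonal argument combining Steps 1 and 2 then yields the desired sequence $\tilde f_k$.

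\textbf{Main obstacle.} The only genuine subtlety is the entropy convergence in the truncation step: near $\{f = 0\}$ the naive bound on $|f\log f_N|$ blows up like $\log N$, so one cannot simply invoke dominated convergence against a fixed dominating function. The resolution, as sketched above, is to compute the difference $\int_X f(\log f - \log f_N)\o^n$ explicitly on the two bad regions $\{f > N\}$ and $\{0 < f < 1/N\}$ and observe that on the low region the $\log N$ divergence is beaten by the factor $\int_{\{f<1/N\}} f\,\o^n = O(1/N)$, while on the high region the integrand is dominated by the tail of $f\log f$. Everything else is routine. I would present the proof by first reducing (Step 2 is elementary) and then spending the bulk of the argument on the two-region estimate in Step 1.

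\begin{proof}
By a diagonal argument it suffices to treat two separate reductions.

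\emph{Reduction to bounded $f$.} For $N > 0$ set $f_N := \min(\max(f,1/N),N)$. Then $1/N \le f_N \le N$ and $f_N \to f$ a.e.; since $0 \le f_N \le f+1$, dominated convergence gives $\int_X |f - f_N|\o^n \to 0$. Moreover
\begin{equation*}
\int_X f(\log f - \log f_N)\o^n = \int_{\{f > N\}} f\log\tfrac{f}{N}\,\o^n + \int_{\{0 < f < 1/N\}} f\log(fN)\,\o^n.
\end{equation*}
On $\{f>N\}$ we have $0 \le f\log(f/N) \le f\log f$, which is the tail of an $\o^n$-integrable function, so this term tends to $0$. On $\{0<f<1/N\}$,
\begin{equation*}
\Big| \int_{\{0<f<1/N\}} f\log(fN)\,\o^n \Big| \le \int_{\{0<f<1/N\}} f|\log f|\,\o^n + \log N \int_{\{0<f<1/N\}} f\,\o^n,
\end{equation*}
where the first term is again a tail of $f|\log f| \in L^1$ (note $f|\log f| \le f\log f + 2/e$), and the second is bounded by $(\log N)\o^n(X)/N \to 0$. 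Hence $\int_X f(\log f - \log f_N)\o^n \to 0$.

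\emph{Smoothing a bounded positive function.} Fix $N$ and let $g := f_N$, so $1/N \le g \le N$. Using a partition of unity and coordinate mollifications (or the heat semigroup of $\o$), construct $\tilde g_k \in C^\infty(X)$ obtained by averaging $g$; then $1/N \le \tilde g_k \le N$ and $\tilde g_k \to g$ in $L^1(X,\o^n)$. Since $t \mapsto \log t$ is $N$-Lipschitz on $[1/N,N]$,
\begin{equation*}
\int_X g\,|\log g - \log \tilde g_k|\,\o^n \le N^2 \int_X |g - \tilde g_k|\,\o^n \to 0.
\end{equation*}
Combining the two reductions via a diagonal sequence produces $\tilde f_k \in C^\infty(X)$, $\tilde f_k > 0$, with $\int_X |f - \tilde f_k|\o^n \to 0$ and $\int_X f(\log f - \log \tilde f_k)\o^n \to 0$.
\end{proof}
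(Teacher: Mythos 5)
Your overall strategy --- truncate $f$ to $f_N=\min(\max(f,1/N),N)$, smooth the truncated function while preserving the bounds $[1/N,N]$, and then diagonalize --- is exactly the strategy of the paper's proof. Your two-region estimate for the truncation step is correct and in fact more detailed than the paper's one-line appeal to absolute continuity of the integral: the key observations that on $\{f<1/N\}$ the $\log N$ divergence is beaten by $\int_{\{f<1/N\}}f\,\o^n=O(1/N)$, and that on $\{f>N\}$ the integrand is dominated by the tail of $f\log f$, are exactly right.

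However, Step 2 as written has a weight mismatch that leaves a real hole in the final assembly. What you control there is $\int_X g\,|\log g-\log\tilde g_k|\,\o^n$ with $g=f_N$ as the weight, whereas the diagonal argument needs $\int_X f\,(\log f_N-\log\tilde g_k)\,\o^n$ with the original, unbounded $f$ as the weight: the quantity to be proved is $\int_X f(\log f-\log\tilde f_k)\,\o^n\to0$, and after subtracting the Step 1 term the remaining piece carries $f$, not $f_N$. On $\{f>N\}$ one has $f\gg f_N=N$, so the bound $\int g|\log g-\log\tilde g_k|\,\o^n\le N^2\int|g-\tilde g_k|\,\o^n$ says nothing about the needed quantity, and the assertion ``it now suffices to approximate a fixed $g$ \dots'' is not justified as stated. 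The repair is short and is essentially what the paper does: since $1/N\le f_N,\tilde g_k\le N$, one has $|\log f_N-\log\tilde g_k|\le 2\log N$, so $f|\log f_N-\log\tilde g_k|$ is dominated by $2(\log N)f\in L^1(X,\o^n)$ and tends to $0$ a.e.\ along a subsequence of any $L^1$-approximating sequence; dominated convergence with respect to $f\,\o^n$ then gives $\int_X f(\log f_N-\log\tilde g_k)\,\o^n\to0$ for fixed $N$. Alternatively, you can bridge your version to the needed one via $\big|\int_X(f-f_N)(\log f_N-\log\tilde g_k)\,\o^n\big|\le 2\log N\int_X|f-f_N|\,\o^n\le 2\int_{\{f>N\}}f\log f\,\o^n+2\log N\,\o^n(X)/N$, which tends to $0$ as $N\to\infty$ uniformly in $k$. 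With either modification the argument is complete and coincides with the paper's.
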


\begin{proof} Let $f^m = \max\{\min\{f,m \},1/m\}$. By the dominated convergence theorem every sequence $\{g_j\}_j \subset C^\infty(X)$ satisfying $m> g_j > 1/m$ and $\int_X |f^m -g_j|\o^n \to 0$ contains an element $\tilde f_m := g_{j_m}$ with $\int_X f (\log(f^m) - \log (\tilde f_m))\o^n \leq 1/n$ and $\int_X |f^m -\tilde f_m|\o^n \leq 1/n$. By the absolute continuity of the Lebesgue integral, it follows that $\{ \tilde f_k\}_k$ satisfies the properties of the lemma.
\end{proof}

\begin{proof}[Proof of Theorem \ref{thm: eqvonvergence}] First we show the equivalence between (i),(ii) and (iii). From \cite[Theorem 5(i)]{da2} it follows that (iii)$\to$(i) and (iii)$\to$(ii).

The proof of (i)$\to$(iii) and (ii)$\to$(iii) are almost the same and we only carry out the latter. It follows from the compactness theorem \cite[Theorem 2.17]{bbegz} (for a statement most suitable for our purposes see \cite[Theorem 5.6]{dr}) that any subsequence of $\{ u_j\}$ contains a subsubsequence $\{ u_{j_k}\}$ such that $d_1(u_{j_k},v) \to 0$ for some $v \in \mathcal E^{\textup{Ent}}$. If we can show that $v=u$ then we are done. By  \cite[Theorem 5(i)]{da2} again, we have $\o^n_{u_j} \to \o_v^n$ weakly, hence by the assumption we get $\o^n_u = \o^n_v$. As $u,v \in \mathcal E^1$, by uniqueness \cite[Theorem B]{gz} (see \cite{di} for a more general result), we conclude that $v=u$.

The direction (iv)$\to$(ii) is trivial and the main step is to argue that (ii)$\to$(iv). Let $f_j = \o^n_{u_j}/\o^n$ and $f = \o^n_{u}/\o^n$. By the $d^C_{p,1}$-convergence criteria \eqref{eq: Calabimetricomparison}, we have to show that $\int_X |f -f_j|\o^n \to 0$. Let $\tilde f_j \in C^\infty(X)$ be the sequence from the previous lemma. For fixed $k$ we have
$$ \lim_{j}\int_X |f -f_j|\o^n \leq 
\limsup_j\int_X |\tilde f_k -f_j|\o^n + \int_X |f -\tilde f_k|\o^n,$$
hence we only need to check that the first term on the right hand side goes to zero as $k \to \infty$. Using the classical K\"ullback-Pinsker inequality (see \cite[Proposition 2.10(ii)]{bbegz} for statement tailored to our setting) we have the following sequence of estimates:
\begin{flalign}\label{calc1}
\nonumber \limsup_j \bigg(\int_X |f_j - \tilde f_k| \o^n\bigg)^2 &\leq \limsup_j\int f_j \log\Big(\frac{f_j}{\tilde f_k}\Big)\o^n\\
\nonumber & \leq \limsup_j \int f_j \log {f_j}\o^n -\liminf_j\int f_j \log \tilde f_k\o^n \\
& = \int f \log {f}\o^n -\int f \log \tilde f_k\o^n, 
\end{flalign}
where to get the last line we have used that $\textup{Ent}(\o_{u_j},\o) \to \textup{Ent}(\o_u, \o)$ and that $\o^n_{u_j}=f_j \o^n$ converges weakly to $\o^n_u$. By the previous lemma, the expression in \eqref{calc1} tends to zero as $k \to \infty$, hence we are done.
\end{proof}

It is perhaps worth noting that (iv) implies (ii) in Theorem \ref{thm: eqvonvergence} without the assumption on the convergence of entropy, as we elaborate now. Suppose $u_j,u \in \mathcal E^{L^1}$ with $d^M_{p,1}(u_j,u)\to 0$. As $\{ u_j\}_j$ is $L^1(X,\o^n)$-compact (since $\int_X u_j \o^n = 0$), we have to argue that any $L^1(X,\o)$-convergent subsequence of $\{ u_j\}$ $L^1$-converges to $u$. Suppose $u_{j_k} \to_{L^1} v \in \textup{PSH}(X,\o)$. By \cite[Proposition 2.10(i)]{begz} it follows that $v \in \mathcal E^1$, in particular $\o^n_v$ has full mass ($u \in \mathcal E$). As a consequence of  \cite[Corollary 2.21]{begz} we now obtain that $\o^n_v \geq \o^n_u$. As both of these last measures have the same total volume we have in fact $\o^n_v = \o^n_u$, hence  $v=u$ as desired (here we used again \cite[Theorem B]{gz}). 

For $q>1$,  $d^C_{p,q}$-convergence implies $C^0$-convergence (hence also $L^1(X,\o^n)$-convergence of potentials) as was noted in the proof of Theorem \ref{thm: MabuchiCalabinotcompatible}, and we summarize our findings in the next remark, obtaining a partial analog of \cite[Theorem 5(i)]{da2} for the $d^C_{p,q}$ metric in the process:

\begin{remark}\label{rem: Calabiconvimpliespointwiseconv} Suppose $1 \leq q \leq p<\infty$ and $u_j,u \in \mathcal E^{L^q}$. Then $d^C_{p,q}(u_j,u) \to 0$ implies that $u_j \to_{L^1(X,\o^n)} u$.
\end{remark}

\paragraph{Acknowledgements.}
This research was supported by BSF grant 2012236.

\bigskip

{\sc University of Maryland} 

{\tt tdarvas@math.umd.edu}
\end{document}